\documentclass[10pt]{article}
\usepackage[utf8]{inputenc}
\usepackage[a4paper,top=2.5cm,bottom=2cm,left=2cm,right=2cm,marginparwidth=1.5cm]{geometry}
\usepackage{amsmath}
\usepackage{amssymb}
\numberwithin{equation}{section}
\usepackage[english]{babel}
\usepackage{amsthm}
\usepackage{bbm}
\usepackage{amsfonts}
\usepackage{subcaption}
\usepackage{comment}
\usepackage{mathrsfs}
\usepackage{hyperref}
\usepackage{titlesec}
\usepackage{bbm}
\usepackage[numbers,sort]{natbib}
\hypersetup{
    colorlinks=true,
    linkcolor=blue,
    citecolor=red,
    filecolor=magenta,      
    urlcolor=black,
    pdftitle={3DWeak},
    pdfpagemode=FullScreen
    }
\usepackage{graphicx, tikz}
\usepackage{float}
\usepackage{xcolor}
\usepackage{bbm}
\usepackage[format=plain,
            font=it]{caption}

\newtheorem{theorem}{Theorem}[section]
\newtheorem{corollary}{Corollary}[theorem]
\newtheorem{lemma}[theorem]{Lemma}
\newtheorem{proposition}[theorem]{Proposition}
\newtheorem*{remark}{Remark}

\newtheorem{definition}[theorem]{Definition}

\newcommand{\upd}{\mathrm{d}}

     \def\d{\delta}          \def\D{\Delta }
    \def\ve{\varepsilon}

    \def\m{\mu}             \def\n{\nu}         
    \def\r{\rho}                 
                   
   \def\f{\phi }                
                         
       \def\w{\omega }

\title{Non-singular hotspots between closely spaced high-index nanoparticles}

\author{Konstantinos Alexopoulos\thanks{Centre de Mathématiques Appliquées, École Polytechnique, 91120 Palaiseau, France.} \and Bryn Davies\thanks{Mathematics Institute, University of Warwick, Coventry CV4 7AL, UK.} \and Pierre Millien\thanks{Institut Langevin, ESPCI Paris, PSL University, CNRS, 1 Rue Jussieu, 75005 Paris, France.}}
\date{}

\begin{document}

\maketitle

\begin{abstract}
    We study the concentration of the field between two nearly touching high-index dielectric resonators in three dimensions. The model is a scalar Helmholtz transmission problem in the resonant regime, wherein the wavelength inside the resonators of the same order as their typical diameter. The material contrast enters only a lower-order term of the equation and not its principal part. As a consequence, the gradient of the field stays bounded independently of the distance separating the particles, and does not blow up. Numerically, however, the gradient still concentrates in the gap: as the particles approach, it grows like the inverse of their separation over an intermediate range of distances, and then saturates once they are very close. We explain this pre-asymptotic effect through a weak-coupling regime, in which the resonant modes of the pair are perturbations of the modes of each isolated particle. When such a mode keeps a nonzero contrast between the two facing boundaries, a mean value argument across the gap accounts for the growth of the gradient; this requires strengthening the standard spectral perturbation estimates from an average to a pointwise control. The growth stops once the interaction between the particles is no longer weak. Numerical experiments confirm the transition from amplification to saturation.
\end{abstract}

\section{Introduction}

\subsection{Context}
When two high-index particles are brought close to contact, a strong concentration of the electromagnetic field appears in the narrow gap between them. In coupled dielectric nanostructures this concentration can take the form of an electric or magnetic hotspot: a localized peak of the  near-field at the point of closest approach. This effect has been documented numerically and experimentally \cite{bakker2015magnetic, mirzaei2015electric, calandrini2018magnetic, pascale2019full, van2016direct}. These works report the enhancement and its dependence on particle proximity,  but do not identify the mathematical mechanism that explains how the gradient in the gap scales with the separation distance.

For elliptic equations with discontinuous coefficients, the formation of a hotspot for the gradient of the solution when two heterogeneities are close to each other has been studied in several settings.  In the general case of an uniformly elliptic system with finite coefficients, the gradient remains bounded independently of the separation distance between the different inclusions  \cite{bonnetier2000elliptic,li2000gradient,li2003estimates} and there is no blow up.  However,  if the contrast between the coefficients in the inclusion and the background go to zero or infinity the gradient may explode as the inclusions are asymptotically close. For the
conductivity equation $\nabla\cdot(a\nabla u)=0$ with $a$ piecewise constant,
equal to a value $k$ inside the inclusions and to $1$ in the background, the
gradient between two inclusions blows up as they approach contact when the
contrast $k$ tends to infinity (perfect conductor) or to zero (insulating
inclusion or void); \cite{ammari2005gradient,
ammari2007estimates, ammari2015elliptic, bao2009gradient, ciraolo2019gradient,
chen2021optimal, kang2021singular},  while of course for a finite $k$ it stays bounded. The same mechanism produces stress concentration in the Lam\'e system of linear elasticity, with stiff inclusions
or holes \cite{kang2019quantitative, ammari2013spectral, lim2017stress}. 
Inspired by physical observations, there has been a major effort in the mathematics community to extend these type of \emph{gradient blow up} results to larger class of dimers of subwavelength resonators : bubbles in fluids,  plasmonic nano-particles,  di-electric nanoparticles\ldots  \cite{ammari2020close,yu2019hybridization,bonnetier2019plasmonic, li2025resonant}. We refer to the recent review \cite{kang2021singular} for a more comprehensive list of the different cases.
The framework for this article is the study of the field concentration between high index dielectric resonators.  The correct governing model is the Maxwell system,  in which the material contrast is carried by the electric permittivity. In a
two-dimensional, $z$-invariant geometry it decouples into two scalar problems,
one for each field aligned with the invariance axis. The electric component
$E_z$ satisfies
\begin{align}\label{eq:scalarE}
    \Delta E_z + \omega^2 \varepsilon\, E_z = 0,
\end{align}
where the contrast enters the zeroth-order term, while the magnetic component
$H_z$ satisfies
\begin{align*}
    \nabla\cdot\big(\varepsilon^{-1}\nabla H_z\big) + \omega^2 H_z = 0,
\end{align*}
where the contrast enters the principal, divergence, term \cite{moiola2019acoustic}.  The field concentration problem has been studied for the second case \cite{deng2022optimal} and for the Maxwell system in the asymptotic regime $\varepsilon\to \infty,  \varepsilon\omega \sim 1$ \cite{deng2022gradient}. 
However,  to the best of our knowledge,  no work has been published on the case $\varepsilon \to \infty ,  \varepsilon\omega^2 \sim 1$ which is the resonant scaling regime for high index nanoparticles \cite{meklachi2018asymptotic,ammari2023mathematical}. Inspired by these scalar reductions,  we aim at studying the gradient concentration problem for \eqref{eq:scalarE} in the \emph{high index resonant regime}, $\varepsilon\gg 1 ,  \varepsilon\omega^2 \sim 1$.  We deliberately adopt a point of view that is close to the applications : we fix the material parameters, frequency and size of the particles so that we are in the resonant regime and then we study the behavior of the gradient of the field when the spacing between the particles go to zero, while all the other parameters are fixed. 

\subsection{Position of the problem}
Of course, in our setting, with the physical parameters fixed and non-degenerated,  since the principal part of the operator is constant, an elementary analysis give that the solution is uniformly in $H^2$ and the gradient remains bounded in the $L^\infty$ norm regardless of the separation distance $\kappa$.  However, as can be seen on Figure \ref{fig:3d_radial_hotspot} there is a gradient concentration phenomenon when the particles are close to each other,  even if there is no asymptotic blow up when the separation distance $\kappa$ goes to zero.  Numerical experiments show that when the particles approach each other,  the gradient grows as $\kappa^{-1}$ and then saturates as $\kappa\to 0$, as shown in Figure \ref{fig:3D_all_modes_MVT}. The goal of this paper is to understand this \emph{pre-asymptotic} concentration effect and the mathematical mechanism behind it. 

\subsection{Main results and contributions}

The main result of the paper is Theorem \ref{thm:hotspot}.  It states that for some of the modes of the dimer system,  the gradient behaves as the inverse of the separation distance $\kappa$ over a range of distances.  Moreover,  this local  $\kappa^{-1}$ scaling can be understood \emph{via} an elementary \emph{mean value theorem} argument. 

The analysis rests on the spectral study of the coupled integral operator
\eqref{pb} associated with the transmission problem \eqref{Helmholtz problem 3D}.
Its central object is a \emph{weak-coupling regime}: the range of separations
over which the interaction between the two resonators is smaller than their
self-interaction. In this regime the coupled resonant modes are explicit
perturbations of the isolated single-particle modes. The perturbation is
naturally controlled in $L^2$, the setting in which the uncoupled operator is
self-adjoint; but the mean value argument needs the modes evaluated at the facing
boundary points, so we lift this $L^2$ smallness to a pointwise $L^\infty$
estimate (Section~\ref{sec:C0_control}). This pointwise control is the
technical core of the paper: it guarantees that the boundary contrast survives
the coupling and produces the $\kappa^{-1}$ gradient. 

Finally, we illustrate the phenomenon numerically, computing the modes of the
coupled three-dimensional system and reconstructing the field outside the
particles through the Lippmann--Schwinger representation.

\section{Mathematical framework}\label{sec:magnetic hotspot}

In this section, we introduce the mathematical framework used throughout the paper. We begin by describing the geometric configuration of two closely spaced dielectric resonators and formulate the associated scattering problem. We then derive a Lippmann--Schwinger representation of the solution and identify the quasi-static Newtonian operator that governs the leading-order resonant behavior in the subwavelength regime. This formulation provides the basis for the weak-coupling analysis developed in the following sections.

\subsection{Geometry}

Let $D_1$ and $D_2$ be two three-dimensional dielectric resonators. The resonators are composed of the same dielectric material, characterized by a frequency-dependent permittivity $\varepsilon_D$, and are embedded in a homogeneous background with constant permittivity $\varepsilon_0$ and permeability $\mu_0$. The distance between the boundaries of $D_1$ and $D_2$ is denoted by $\kappa>0$, and we are interested in the nearly touching regime $\kappa \to 0$. The characteristic size of the system is denoted by $\delta$, and we assume the subwavelength regime
\begin{align}
    \delta k_0 \ll 1,
    \qquad
    k_0 = \omega \sqrt{\varepsilon_0\mu_0}.
\end{align}
A schematic depiction of this geometry is shown in Figure~\ref{fig:spheres}, where the particles $D_1$ and $D_2$ are represented as spheres of radii $\rho_1$ and $\rho_2$, respectively.

\begin{figure}[ht]
    \centering
    \includegraphics[width=0.55\linewidth]{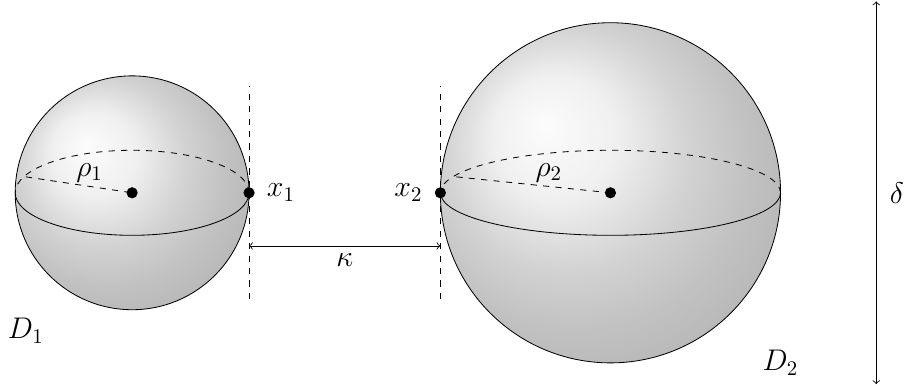}
    \caption{Geometry of the two-resonator configuration. The particles $D_1$ and $D_2$ are modeled as spheres of radii $\rho_1$ and $\rho_2$, respectively, embedded in a homogeneous background medium. Their boundary-to-boundary separation is denoted by $\kappa$, and the nearly touching regime corresponds to $\kappa\to0$.}
    \label{fig:spheres}
\end{figure}

\subsection{Governing equations}

We consider the scattering of time-harmonic scalar waves by two dielectric resonators embedded in a homogeneous background medium. The resonant response is described by a scalar field $u$, and the hotspot is measured through the concentration of the gradient $\nabla u$. The total field $u$ satisfies the Helmholtz transmission problem
\begin{align}\label{Helmholtz problem 3D}
    \begin{cases}
    \Delta u + \delta^2 \omega^2 \varepsilon_D\mu_0\, u = 0 & \text{in } D := D_1 \cup D_2, \\
    \Delta u + \delta^2 k_0^2 u = 0 & \text{in } \mathbb{R}^3 \setminus \overline{D},\\
    u|_+ - u|_-=0 &\text{ on } \partial D_i, \quad i=1,2, \\
    \frac{\partial u}{\partial \n}|_+ - \frac{\partial u}{\partial \n}|_- = 0 &\text{ on } \partial D_i, \quad i=1,2, \\
    u(x) - u_{in}(x) &\text{ satisfies the outgoing radiation condition as } |x|\to \infty,\\
    \end{cases}
\end{align}
where $u_{in}$ is the incident wave, assumed to satisfy
\begin{align*}
    (\D + k_0^2)u_{in}=0 \ \ \text{ in } \mathbb{R}^3,
\end{align*}
and the appropriate outgoing radiation condition is the Sommerfeld radiation condition, which requires that
\begin{equation}\label{Sommerfeld}
    \lim_{|x|\to\infty} |x|
    \left( \frac{\partial}{\partial |x|} - i k_0 \right)\Big[u(x) - u_{in}(x)\Big]=0.
\end{equation}

\subsection{Integral formulation}

Let $G^k(x)$ be the outgoing Helmholtz Green's function in $\mathbb{R}^3$, defined as the unique solution to
\begin{align*}
    (\D + k^2) G^k(x) = \d_0(x) \ \ \text{ in } \mathbb{R}^3,
\end{align*}
along with the outgoing radiation condition (\ref{Sommerfeld}). It is well known that $G$ is given by
\begin{align}
    G^k(x)= -\frac{e^{ik|x|}}{4\pi|x|}.
\end{align}
We have the following integral representation formula for the solution $u$ of \eqref{Helmholtz problem 3D}, as stated in \cite{AD}.

\begin{theorem}[Lippmann-Schwinger Integral Representation Formula] \label{Integral representation}
    The solution to the Helmholtz problem \eqref{Helmholtz problem 3D} is given by
    \begin{align}\label{eq:LS_3D}
        u(x)-u_{in}(x) = -\d^2 \w^2 \xi_D \int_D G^{\d k_0}(x-y) u(y) \upd y, \ \ x\in\mathbb{R}^3,
    \end{align} 
    where the function $\xi_D:\mathbb{C}\to\mathbb{C}$ describes the permittivity contrast between $D$ and the background and is given by
    \begin{align}
        \xi_D = \m_0\Big(\ve_D - \ve_0\Big).
    \end{align}
\end{theorem}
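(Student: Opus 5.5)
The plan is to rewrite \eqref{Helmholtz problem 3D} as a single Helmholtz equation with a compactly supported potential and then invert the exterior operator with its outgoing Green's function. Set $q(x) := \delta^2\omega^2\mu_0(\varepsilon_D-\varepsilon_0)\mathbbm{1}_D(x) = \delta^2\omega^2\xi_D\mathbbm{1}_D(x)$. Using $\delta^2 k_0^2=\delta^2\omega^2\varepsilon_0\mu_0$, the two interior and exterior equations combine into
\begin{align*}
    \Delta u + \delta^2 k_0^2 u = -q\,u \quad \text{in } \mathbb{R}^3\setminus\partial D .
\end{align*}
The first step is to check that this identity holds in the sense of distributions on all of $\mathbb{R}^3$. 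I would pair $u$ with a test function $\varphi\in C_c^\infty(\mathbb{R}^3)$ and apply Green's second identity separately in $D$ and in $\mathbb{R}^3\setminus\overline D$. The boundary terms on $\partial D_i$ are $\int_{\partial D_i}\big(u\,\partial_\nu\varphi-\varphi\,\partial_\nu u\big)$, taken from both sides. They cancel exactly because of the two transmission conditions $[u]=0$ and $[\partial_\nu u]=0$. The regularity needed to justify this ($u\in H^2_{loc}$ near $\partial D$) follows from standard elliptic regularity for the transmission problem with constant principal part. Hence $(\Delta+\delta^2k_0^2)u=-qu$ in $\mathcal{D}'(\mathbb{R}^3)$.

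Next, set $v:=u-u_{in}$. Since $(\Delta+k_0^2)u_{in}=0$ (read at the rescaled wavenumber $\delta k_0$ in which the problem is posed), $v$ satisfies $(\Delta+\delta^2k_0^2)v=-q\,u$ in $\mathbb{R}^3$, with a compactly supported, locally $L^2$ right-hand side, and $v$ satisfies the Sommerfeld condition \eqref{Sommerfeld}. Define the volume potential
\begin{align*}
    w(x) := -\int_D G^{\delta k_0}(x-y)\,q(y)u(y)\,\upd y = -\delta^2\omega^2\xi_D\int_D G^{\delta k_0}(x-y)u(y)\,\upd y .
\end{align*}
Because $G^{\delta k_0}$ is the outgoing fundamental solution, $(\Delta+\delta^2k_0^2)w=-qu$. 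Moreover $w$ is radiating: for $|x|\to\infty$ one has $G(x-y)\sim G(x)e^{-i\delta k_0\hat x\cdot y}$ uniformly for $y\in D$, and the Sommerfeld condition passes through the integral.

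The final step, which I expect to be the main point needing care, is uniqueness. The difference $v-w$ solves the homogeneous Helmholtz equation $(\Delta+\delta^2k_0^2)(v-w)=0$ on all of $\mathbb{R}^3$, so it is real-analytic, and it satisfies the radiation condition. Rellich's lemma, applied with the real wavenumber $\delta k_0>0$, then gives $v-w\equiv 0$. This uses the fact that an entire radiating solution has zero far field: by Green's identity on a large ball, $\operatorname{Im}\int_{|x|=R}\overline{(v-w)}\,\partial_r(v-w)=0$. This yields exactly \eqref{eq:LS_3D}.

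The delicate points are the distributional gluing across $\partial D$, which is what encodes the transmission conditions, and checking that $u$ is regular enough on the bounded set $D$ for the volume potential to be well defined and radiating. Both are standard, so I would cite \cite{AD} for the regularity and for the radiation estimate of the Newtonian-type potential.
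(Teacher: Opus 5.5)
Your proof is correct. It is the standard derivation that the paper does not reproduce and instead delegates to \cite{AD}. You use the transmission conditions to write $(\Delta+\delta^2k_0^2)u=-\delta^2\omega^2\xi_D\chi_D u$ in $\mathcal{D}'(\mathbb{R}^3)$, and then match $u-u_{in}$ with the outgoing volume potential. Your signs agree with the convention $(\Delta+k^2)G^k=\delta_0$, $G^k=-e^{ik|x|}/(4\pi|x|)$, and give exactly the minus sign in \eqref{eq:LS_3D}. You then conclude by Rellich uniqueness for entire radiating solutions.

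You are also right to read $u_{in}$ and the radiation condition \eqref{Sommerfeld} at the rescaled wavenumber $\delta k_0$. Taken literally with $k_0$, $u-u_{in}$ would not satisfy a single Helmholtz equation. Note that Rellich's lemma needs $\delta k_0>0$ real, so your argument covers the scattering problem but not the complex resonant frequencies in \eqref{pb}; there the integral equation serves as the definition of an outgoing mode.
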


Based on \eqref{eq:LS_3D}, we define the operator $\mathcal{K}^{\d k_0}$ by
\begin{align}
    \begin{aligned}
        \mathcal{K}^{\d k_0}_D: \ \mathrm{L}^{2}&(D) \ &&\longrightarrow \qquad \qquad  \mathrm{L}^{2}(D) \\ 
        &u &&\longmapsto  \int_D G^{\d k_0}(x-y) u(y) \upd y. 
    \end{aligned}
\end{align}

In order to study the behavior of the system of nanoparticles, we will characterize the properties of the operator $K^{\d k_0}_D$. We observe that the Lippmann-Schwinger formulation \eqref{eq:LS_3D} of the problem is equivalent to
\begin{align*}
    (u-u_{in})(x) = \d^2 \w^2 \xi_D \mathcal{K}^{\d k_0}_D[u](x).
\end{align*}
This is equivalent to
\begin{align*}
    \Big(\text{Id} - \d^2 \w^2 \xi_D K^{\d k_0}_D\Big)[u](x) = u_{in}(x), 
\end{align*}
for all $x\in D$, where $\text{Id}$ denotes the identity operator. Then, the subwavelength resonance problem is to find $\w \in \mathbb{C}$ close to 0, such that the operator $(I - \d^2 \w^2 \xi_D \mathcal{K}^{\d k_0}_D)^{-1}$ is singular, or equivalently, such that there exists $u \in L^2(D)$, $u \ne 0$ with
\begin{align}\label{pb}
    u(x) - \d^2 \w^2 \xi_D \int_D G^{\d k_0}(x-y) u(y) \upd y = 0, \ \ \ \text{ for } x \in D.
\end{align}

\subsection{Quasi-static limit}

Let us define the Newtonian potential on $D$ to be $\mathcal{K}^{(0)}_D: L^2(D)\to L^2(D)$ such that
\begin{equation}\label{eq:K_3D_static}
    \mathcal{K}^{(0)}_D[u](x) := \int_D u(y) G^0(x-y) \upd y = - \frac{1}{4\pi} \int_D \frac{1}{|x-y|}u(y)\upd y.
\end{equation}
Similarly, we define the operators $\mathcal{K}^{(n)}_D: L^2(D) \to L^2(D)$, for $n=1,2,\dots,$ as
\begin{equation}\label{Kn}
    \mathcal{K}^{(n)}_D[u](x) = - \frac{i}{4\pi} \int_D \frac{(i|x-y|)^{n-1}}{n!}u(y)\upd y.
\end{equation}
Then, the operator $\mathcal{K}^{\d k_0}_D$ admits the following asymptotic expansion for small $\d$, as stated in \cite{AD}.
\begin{lemma}[\cite{AD}] \label{Taylor d=3}
    The operator $\mathcal{K}^{\d k_0}_D$ can be rewritten as
    \begin{align*}
        \mathcal{K}^{\d k_0}_D = \sum_{n=0}^{\infty} (\d k_0)^n \mathcal{K}^{(n)}_D,
    \end{align*}
    where the series converges in the $L^2(D) \to L^2(D)$ operator norm if $\d k_0$ is small enough.
\end{lemma}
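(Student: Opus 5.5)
The plan is to expand the exponential in the Green's function as a power series and then interchange the sum with the integral, using an operator-norm bound on each term to justify the interchange.

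\textbf{Kernel expansion.} Write the kernel as
\[
G^{\d k_0}(x-y) = -\frac{e^{i\d k_0|x-y|}}{4\pi|x-y|} = -\frac{1}{4\pi|x-y|}\sum_{m=0}^\infty \frac{(i\d k_0|x-y|)^m}{m!}.
\]
The $m=0$ term is exactly the kernel of $\mathcal{K}^{(0)}_D$ in \eqref{eq:K_3D_static}. For $m=n\ge 1$ we have
\[
-\frac{1}{4\pi}\,\frac{i^n (\d k_0)^n |x-y|^{n-1}}{n!} = (\d k_0)^n\Big(-\frac{i}{4\pi}\Big)\frac{(i|x-y|)^{n-1}}{n!},
\]
since $i^n = i\cdot i^{n-1}$. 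This matches \eqref{Kn}. So, kernel by kernel, $G^{\d k_0}(x-y)=\sum_n (\d k_0)^n G^{(n)}(x-y)$, where $G^{(n)}$ denotes the kernel of $\mathcal{K}^{(n)}_D$.

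\textbf{Operator-norm bounds.} Let $d=\operatorname{diam}(D)<\infty$ (with $\kappa$ fixed, $D$ is bounded).

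For $n=0$, the Newtonian kernel $|x-y|^{-1}$ is weakly singular. By Schur's test, $\sup_x\int_D |x-y|^{-1}\,\upd y\le \int_{B(0,d)}|z|^{-1}\,\upd z = 2\pi d^2$, so $\|\mathcal{K}^{(0)}_D\|\le d^2/2$.

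For $n\ge1$, the kernel is bounded by $d^{n-1}/(4\pi n!)$. Hilbert--Schmidt (or Schur's test again) gives
\[
\|\mathcal{K}^{(n)}_D\|_{L^2\to L^2}\le \frac{|D|\,d^{n-1}}{4\pi\, n!}.
\]
Hence
\[
\sum_n (\d k_0)^n\|\mathcal{K}^{(n)}_D\| \le C + \frac{|D|}{4\pi d}\big(e^{\d k_0 d}-1\big)<\infty.
\]
This series converges for every value of $\d k_0$, and in particular when $\d k_0$ is small. Since $L^2(D)\to L^2(D)$ operators with the operator norm form a Banach space, the series $\sum_n (\d k_0)^n\mathcal{K}^{(n)}_D$ converges absolutely in operator norm to some bounded operator $\mathcal{T}$.

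\textbf{Identification of the limit with $\mathcal{K}^{\d k_0}_D$.} Fix $u\in L^2(D)$ and $x\in D$. The partial sums of the kernel are dominated by
\[
\frac{1}{4\pi|x-y|}\sum_m \frac{(\d k_0 d)^m}{m!}=\frac{e^{\d k_0 d}}{4\pi|x-y|},
\]
and $|x-y|^{-1}|u(y)|$ is integrable in $y$ for a.e. $x$ (indeed $\mathcal{K}^{(0)}_D|u|\in L^2$). Dominated convergence then gives
\[
\mathcal{K}^{\d k_0}_D[u](x)=\sum_n(\d k_0)^n\mathcal{K}^{(n)}_D[u](x)\quad\text{for a.e. } x.
\]
Norm convergence implies a.e. convergence along a subsequence of the partial sums, so the two limits agree and $\mathcal{T}=\mathcal{K}^{\d k_0}_D$.

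\textbf{Main obstacle.} The only delicate point is the weak singularity of the $n=0$ kernel. It is handled by the Schur bound above: the singularity $|x-y|^{-1}$ is integrable in three dimensions. All the terms with $n\ge1$ have bounded kernels, so for them the estimate is routine.
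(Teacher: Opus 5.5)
The paper gives no proof of its own and simply defers to \cite{AD}. Your argument is correct and is the standard route behind that citation: you Taylor-expand $e^{i\delta k_0|x-y|}$, bound $\|\mathcal{K}^{(0)}_D\|$ by Schur's test and $\|\mathcal{K}^{(n)}_D\|\le |D|\,\mathrm{diam}(D)^{n-1}/(4\pi\, n!)$ for $n\ge1$, then identify the norm limit with $\mathcal{K}^{\delta k_0}_D$ by dominated convergence; it in fact gives convergence for every value of $\delta k_0$ (with $|\delta k_0|$ in place of $\delta k_0$ if the frequency is complex), which is stronger than the smallness assumption in the statement.
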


Hence, in the subwavelength regime $\delta k_0\ll1$, the operator admits the expansion
\begin{align}\label{eq:K_3D_expansion}
    \mathcal K_D^{\delta k_0} = \mathcal K_D^{(0)} + O(\delta k_0),
\end{align}
in the $L^2(D)\to L^2(D)$ operator norm, where the leading-order static operator is the Newtonian volume potential $\mathcal K_D^{(0)}$. Hence, the dominant spectral structure is governed directly by the Newtonian operator $\mathcal K_D^{(0)}$.

The resonant frequencies are determined by the nonlinear eigenvalue problem associated with \eqref{eq:LS_3D}. In the subwavelength regime, the leading-order modal structure can therefore be studied through the spectral properties of $\mathcal K_D^{(0)}$ and its two-particle interaction structure.

\section{Weak coupling regime} \label{sec:weak coupling}

We now introduce the weak-coupling framework that forms the core of our analysis. The key idea is to separate the intrinsic resonant behavior of each isolated resonator from the interaction effects generated by their proximity. This is achieved through a block decomposition of the quasi-static interaction operator. We then use the spectral structure of the isolated resonators to define the weak-coupling regime and derive perturbative descriptions of the coupled resonant modes.

\subsection{Block decomposition}

Since the resonator domain consists of two particles, we use the Hilbert space decomposition
\begin{align*}
    L^2(D) = L^2(D_1)\oplus L^2(D_2).
\end{align*}
With respect to this decomposition, the static operator can be written in block form as
\begin{align}\label{eq:blockdecomp_3D}
    \mathcal K_D^{(0)} =
    \begin{pmatrix}
        \mathcal K_{11}^{(0)} & \mathcal K_{12}^{(0)}\\
        \mathcal K_{21}^{(0)} & \mathcal K_{22}^{(0)}
    \end{pmatrix},
\end{align}
where
\begin{align*}
    \mathcal K_{ij}^{(0)}:L^2(D_j)\to L^2(D_i)
\end{align*}
is given by
\begin{align}
    \mathcal K_{ij}^{(0)}[u](x) = \mathcal K_{D_j}^{(0)}[u](x), \qquad \text{with} \quad x\in D_i,
\end{align}
for $i,j=1,2$. The diagonal blocks $\mathcal K_{11}^{(0)}$ and $\mathcal K_{22}^{(0)}$ describe the self-interaction of each isolated resonator, while the off-diagonal blocks $\mathcal K_{12}^{(0)}$ and $\mathcal K_{21}^{(0)}$ encode the interaction between the two particles.

We therefore decompose
\begin{align}\label{eq:diagoff_3D}
    \mathcal K_D^{(0)} = \mathcal K_{\mathrm{diag}}^{(0)} + \mathcal K_{\mathrm{off}}^{(0)},
\end{align}
where
\begin{align}
    \mathcal K_{\mathrm{diag}}^{(0)} :=
    \begin{pmatrix}
        \mathcal K_{11}^{(0)} & 0\\
        0 & \mathcal K_{22}^{(0)}
    \end{pmatrix},
    \qquad
    \mathcal K_{\mathrm{off}}^{(0)} :=
    \begin{pmatrix}
        0 & \mathcal K_{12}^{(0)}\\
        \mathcal K_{21}^{(0)} & 0
    \end{pmatrix}.
\end{align}
This decomposition separates the intrinsic resonant behavior of the isolated particles from the interaction effects caused by their proximity.

\subsection{Isolated spectral modes}

The diagonal operator $\mathcal K_{\mathrm{diag}}^{(0)}$ is the direct sum of the two isolated single-particle operators. Thus, its spectrum is obtained from the spectra of $\mathcal K_{D_1}^{(0)}$ and $\mathcal K_{D_2}^{(0)}$.

Let us denote by 
\begin{align}
    \{\rho_n^{(1)},f_n^{(1)}\}_{n\geq1}
    \qquad\text{ and }\qquad
    \{\rho_n^{(2)},f_n^{(2)}\}_{n\geq1},
\end{align}
the eigenpairs of the operators $\mathcal{K}^{(0)}_{11}$ and $\mathcal{K}^{(0)}_{22}$, respectively. Then the eigenvectors of $\mathcal{K}^{(0)}_{\mathrm{diag}}$ are obtained by lifting the isolated eigenvectors to $L^2(D_1) \oplus L^2(D_2)$. Namely, we define
\begin{align}
    \Phi_j^{(1)} :=
    \begin{pmatrix}
        f_j^{(1)}\\
        0
    \end{pmatrix},
    \qquad \text{ and } \qquad
    \Phi_j^{(2)} :=
    \begin{pmatrix}
        0\\
        f_j^{(2)}
    \end{pmatrix}.
\end{align}
Then,
\begin{align}
    \mathcal K_{\mathrm{diag}}^{(0)}\Phi_j^{(1)} = \rho_j^{(1)}\Phi_j^{(1)}
    \qquad \text{ and } \qquad
    \mathcal K_{\mathrm{diag}}^{(0)}\Phi_j^{(2)} = \rho_j^{(2)}\Phi_j^{(2)}.
\end{align}

When the particles are separated by a positive gap $\kappa$, the off-diagonal operator $\mathcal K_{\mathrm{off}}^{(0)}$ couples these isolated modes. The coupled eigenmodes are therefore obtained by perturbing the isolated eigenmodes through the interaction blocks.

\subsection{The particular case of identical resonators} \label{sec:identical_resonators}

Let us here note that in the case where $D_1$ and $D_2$ are identical particles, then, we clearly see that $\mathcal{K}^{(0)}_{11} \equiv \mathcal{K}^{(0)}_{22}$ and $\mathcal{K}^{(0)}_{12} \equiv \mathcal{K}^{(0)}_{21}$. This gives
\begin{align*}
    \mathcal K_{\mathrm{diag}}^{(0)} :=
    \begin{pmatrix}
        \mathcal K_{11}^{(0)} & 0\\
        0 & \mathcal K_{11}^{(0)}
    \end{pmatrix}
    \qquad \text{ and } \qquad
    \mathcal K_{\mathrm{off}}^{(0)} :=
    \begin{pmatrix}
        0 & \mathcal K_{12}^{(0)}\\
        \mathcal K_{12}^{(0)} & 0
    \end{pmatrix}.
\end{align*}
Then, the spectrum of $\mathcal K_{\mathrm{diag}}^{(0)}$ is obtained from the spectrum of $\mathcal K_{D_1}^{(0)}$.
If we denote by $\{\rho_n^{(1)},f_n^{(1)}\}_{n\geq1}$ the eigenpairs of the operator $\mathcal{K}^{(0)}_{11}$, the eigenvectors of $\mathcal{K}^{(0)}_{\mathrm{diag}}$ are obtained by lifting the isolated eigenvectors to $L^2(D_1)$. Namely, we define
\begin{align}
    \Phi_j^{(1)} :=
    \begin{pmatrix}
        f_j^{(1)}\\
        f_j^{(1)}
    \end{pmatrix},
    \qquad \text{ and } \qquad
    \Phi_j^{(2)} :=
    \begin{pmatrix}
        -f_j^{(1)}\\
        f_j^{(1)}
    \end{pmatrix}.
\end{align}
Then,
\begin{align}
    \mathcal K_{\mathrm{diag}}^{(0)}\Phi_j^{(1)} = \rho_j^{(1)}\Phi_j^{(1)}
    \qquad \text{ and } \qquad
    \mathcal K_{\mathrm{diag}}^{(0)}\Phi_j^{(2)} = \rho_j^{(1)}\Phi_j^{(2)}.
\end{align}

\subsection{Weak-coupling regime}

We now define the weak-coupling regime. Physically, this is the regime in which each resonator retains its isolated modal structure, while the interaction between the two particles produces only a perturbative correction. Mathematically, this means that the off-diagonal interaction is small compared with the diagonal self-interaction.

\begin{definition}[Weak-coupling regime]
We say that the two-resonator system is in the weak-coupling regime if
\begin{align}\label{eq:weakcoupling_3D}
    \| \mathcal K_{\mathrm{off}}^{(0)} \|
    \ll
    \| \mathcal K_{\mathrm{diag}}^{(0)} \|.
\end{align}
\end{definition}

\begin{remark}
    The condition \eqref{eq:weakcoupling_3D} ensures that the coupled spectral structure remains organized by the isolated resonator modes. In particular, the coupled eigenmodes remain perturbative deformations of the lifted isolated eigenfunctions.
\end{remark}

A sharper spectral version of this condition compares the size of the off-diagonal interaction with the spectral separation of the isolated resonators. In that sense, we are able to define the weak coupling regime as the perturbation around a specific eigenpair of either $\mathcal{K}^{(0)}_{ii}$, for $i=1,2$.

\begin{definition}[Spectral weak-coupling regime]
    Let $j=1,2$ and $n\geq1$. Then, 
    for the eigenpair $\{ \r_n^{(j)}, f_n^{(j)} \}$, we say that the two-resonator system is in the weak-coupling regime if
    \begin{align}\label{eq:spectralgap_3D}
        \| \mathcal K_{\mathrm{off}}^{(0)} \| < \operatorname{dist} \left( \rho_n^{(j)}, \operatorname{sp}(\mathcal K_{\mathrm{diag}}^{(0)}) \setminus \{\rho_n^{(j)}\} \right).
    \end{align}
\end{definition}

Under this condition, the interaction between the particles is smaller than the spectral gap of the uncoupled problem. Hence, the corresponding coupled eigenmodes remain perturbative deformations of the isolated modes, in accordance with standard spectral perturbation theory \cite{Kato}.

This perturbative structure is fundamental for the hotspot mechanism. In the weak-coupling regime, the resonant field is not arbitrary: it is organized by the isolated eigenmodes of the two particles. If the coupled mode induces distinct effective values on the two facing sides of the resonators, then the field must transition across the gap. Since the gap has width $\kappa$, a mean value argument implies that the gradient in the gap can become large, with an intermediate scaling of order $\kappa^{-1}$.

The weak-coupling regime is therefore the regime in which the isolated modal structure remains stable enough to create a strong field contrast, while the geometry of the narrow gap converts this contrast into magnetic localization.

\subsection{Modal structure in the weak-coupling regime}
\label{subsec:modal_structure_weak_coupling}

We now describe the modal structure of the coupled operator in the weak-coupling regime. Instead of writing the perturbative expansion separately for each lifted isolated mode, we use a notation adapted to a single isolated eigenvalue.\\
Let $\Phi\in L^2(D)$ be an $L^2$-normalized eigenfunction of the diagonal operator $\mathcal K_{\mathrm{diag}}^{(0)}$, associated with a simple isolated eigenvalue $\rho$, namely
\begin{align}
    \mathcal K_{\mathrm{diag}}^{(0)}\Phi=\rho\Phi,
    \qquad
    \|\Phi\|_{L^2(D)}=1.
\end{align}
We denote by
\begin{align}
    d_\rho
    :=
    \operatorname{dist}
    \left(
    \rho,
    \operatorname{sp}(\mathcal K_{\mathrm{diag}}^{(0)})\setminus\{\rho\}
    \right)
    >0
    \label{eq:spectral_gap_drho}
\end{align}
the spectral gap associated with $\rho$. Let $P_\rho$ be the orthogonal projection onto $\operatorname{span}\{\Phi\}$ and set $Q_\rho=I-P_\rho$. The reduced resolvent is defined by
\begin{align}
    S_\rho
    :=
    Q_\rho(\mathcal K_{\mathrm{diag}}^{(0)}-\rho I)^{-1}Q_\rho.
\end{align}
It satisfies
\begin{align}
    \|S_\rho\|_{L^2\to L^2}\leq \frac{1}{d_\rho}.
\end{align}
We define
\begin{align}
    \varepsilon_2
    :=
    \|\mathcal K_{\mathrm{off}}^{(0)}\|_{L^2\to L^2}.
\end{align}
The spectral weak-coupling regime near the eigenvalue $\rho$ is the regime in which
\begin{align}
    \frac{\varepsilon_2}{d_\rho}\ll1.
    \label{eq:spectral_wc_eps}
\end{align}
This condition means that the off-diagonal interaction is small compared with the spectral separation of the isolated mode under consideration.

Let $\Phi_\kappa=\Phi+\delta\phi$ be the perturbed eigenfunction associated with $\rho_\kappa=\rho+\delta\rho$, so that
\begin{align*}
    (\mathcal{K}_{\mathrm{diag}}^{(0)}+\mathcal{K}_{\mathrm{off}}^{(0)})\Phi_\kappa
    =
    \rho_\kappa\Phi_\kappa.
\end{align*}
We choose the phase and normalization by imposing
\begin{align*}
    \|\Phi_\kappa\|_{L^2}=1,
    \qquad
    \langle \Phi_\kappa,\Phi\rangle_{L^2}\in\mathbb R_{>0},
\end{align*}
and define
\begin{align*}
    \delta\phi:=\Phi_\kappa-\Phi.
\end{align*}

\begin{theorem}[Weak-coupling modal perturbation]
\label{thm:3D_modal_expansion}
Assume that $\rho$ is a simple isolated eigenvalue of $\mathcal K_{\mathrm{diag}}^{(0)}$ and that \eqref{eq:spectral_wc_eps} holds. Then there exists an eigenpair $(\rho_\kappa,\Phi_\kappa)$ of $\mathcal K_D^{(0)}$
such that
\begin{align}
    \|\delta\phi\|_{L^2(D)}
    \leq
    C\frac{\varepsilon_2}{d_\rho},
    \label{eq:L2_modal_perturbation}
\end{align}
and
\begin{align}
    \delta\phi
    =
    -S_\rho\mathcal K_{\mathrm{off}}^{(0)}\Phi
    +
    O_{L^2}
    \left(
        \frac{\varepsilon_2^2}{d_\rho^2}
    \right),
    \label{eq:first_order_modal_perturbation}
\end{align}
while
\begin{align}
    \delta\rho
    =
    \left\langle
    \mathcal K_{\mathrm{off}}^{(0)}\Phi,\Phi
    \right\rangle_{L^2(D)}
    +
    O\left(\frac{\varepsilon_2^2}{d_\rho}\right).
    \label{eq:eigenvalue_shift}
\end{align}
\end{theorem}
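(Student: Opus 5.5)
We use standard analytic perturbation theory for a self-adjoint operator: Riesz projection for existence, then a Lyapunov--Schmidt (Feshbach) reduction for the expansions. The static Newtonian kernel $G^0(x-y)$ is real and symmetric, and it is weakly singular on the bounded set $D$. So $\mathcal K_{\mathrm{diag}}^{(0)}$, $\mathcal K_{\mathrm{off}}^{(0)}$ and $\mathcal K_D^{(0)}$ are compact self-adjoint operators on $L^2(D)$.

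\textbf{Existence.} Let $\Gamma$ be the circle of radius $d_\rho/2$ around $\rho$. For $z\in\Gamma$ we have $\|(\mathcal K_{\mathrm{diag}}^{(0)}-z)^{-1}\|\le 2/d_\rho$. Since $\varepsilon_2/d_\rho<1/2$, a Neumann series gives $\|(\mathcal K_D^{(0)}-z)^{-1}\|\le 2/(d_\rho-2\varepsilon_2)$. The Riesz projections
\[
P(t)=-\frac{1}{2\pi i}\oint_\Gamma \bigl(\mathcal K_{\mathrm{diag}}^{(0)}+t\mathcal K_{\mathrm{off}}^{(0)}-z\bigr)^{-1}dz
\]
are continuous in $t\in[0,1]$ with $\|P(1)-P_\rho\|\le C\varepsilon_2/d_\rho<1$. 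Hence $\operatorname{rank}P(1)=1$, and $\mathcal K_D^{(0)}$ has exactly one eigenvalue $\rho_\kappa$ inside $\Gamma$. In particular $|\delta\rho|\le d_\rho/2$; by Weyl's inequality, in fact $|\delta\rho|\le\varepsilon_2$.

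Take $\Phi_\kappa=P(1)\Phi/\|P(1)\Phi\|$. Then $\langle\Phi_\kappa,\Phi\rangle=\|P(1)\Phi\|>0$, because $P(1)$ is an orthogonal projection and $\|P(1)\Phi-\Phi\|<1$. This yields the normalization, and the bound \eqref{eq:L2_modal_perturbation} already follows from $\|\Phi_\kappa-\Phi\|\le 2\|(P(1)-P_\rho)\Phi\|$.

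\textbf{Expansions.} Write $\Phi_\kappa=\alpha\Phi+\psi$ with $\psi=Q_\rho\Phi_\kappa$ and $\alpha=\langle\Phi_\kappa,\Phi\rangle$. Here $\alpha\in(0,1]$ and $1-\alpha=O(\varepsilon_2^2/d_\rho^2)$, since $\alpha^2+\|\psi\|^2=1$.

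First project the eigen-equation with $Q_\rho$. Because $\mathcal K_{\mathrm{diag}}^{(0)}$ commutes with $Q_\rho$, we get
\[
(\mathcal K_{\mathrm{diag}}^{(0)}-\rho_\kappa)\psi=-Q_\rho\mathcal K_{\mathrm{off}}^{(0)}\Phi_\kappa .
\]
On $\operatorname{Ran}Q_\rho$, the operator $\mathcal K_{\mathrm{diag}}^{(0)}-\rho_\kappa$ is invertible with inverse of norm at most $1/(d_\rho-|\delta\rho|)\le 2/d_\rho$. Its inverse is $S_{\rho_\kappa}$, and the resolvent identity gives $S_{\rho_\kappa}-S_\rho=\delta\rho\,S_{\rho_\kappa}S_\rho$, of norm $O(\varepsilon_2/d_\rho^2)$.

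This gives $\|\psi\|\le 2\varepsilon_2/d_\rho$, which is \eqref{eq:L2_modal_perturbation} again. Substituting $\Phi_\kappa=\alpha\Phi+\psi$ then gives
\[
\psi=-S_\rho\mathcal K_{\mathrm{off}}^{(0)}\Phi+O\!\left(\frac{\varepsilon_2^2}{d_\rho^2}\right),
\]
since each remaining term has norm $O(\varepsilon_2^2/d_\rho^2)$. Combined with $\delta\phi=(\alpha-1)\Phi+\psi$, this is \eqref{eq:first_order_modal_perturbation}.

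Next take the inner product of the eigen-equation with $\Phi$, using self-adjointness of $\mathcal K_{\mathrm{diag}}^{(0)}$:
\[
\alpha\,\delta\rho=\langle\mathcal K_{\mathrm{off}}^{(0)}\Phi_\kappa,\Phi\rangle
=\alpha\langle\mathcal K_{\mathrm{off}}^{(0)}\Phi,\Phi\rangle+\langle\mathcal K_{\mathrm{off}}^{(0)}\psi,\Phi\rangle .
\]
The last term is bounded by $\varepsilon_2\|\psi\|=O(\varepsilon_2^2/d_\rho)$. Dividing by $\alpha\ge 1/2$ gives \eqref{eq:eigenvalue_shift}.

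\textbf{Main obstacle.} The difficulty is not analytic depth but uniformity. Every implicit constant must depend only on the ratio $\varepsilon_2/d_\rho$ and not on $\kappa$ otherwise, because the geometry changes as $\kappa\to0$. The key points are:
\begin{itemize}
\item the bound $|\delta\rho|\le\varepsilon_2\le d_\rho/2$;
\item invertibility of $\mathcal K_{\mathrm{diag}}^{(0)}-\rho_\kappa$ on $\operatorname{Ran}Q_\rho$ with the same gap $d_\rho$;
\item $\alpha$ bounded away from zero, which the phase choice requires.
\end{itemize}

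In the identical-particle case $\rho$ is doubly degenerate, so simplicity must be assumed. Alternatively, one would work in a symmetry sector (even or odd), where it holds.
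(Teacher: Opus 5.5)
Your proposal is correct and follows the route the paper invokes. The paper's proof only appeals to standard non-degenerate (Kato) perturbation theory for the compact self-adjoint splitting $\mathcal K_D^{(0)}=\mathcal K_{\mathrm{diag}}^{(0)}+\mathcal K_{\mathrm{off}}^{(0)}$ with the reduced resolvent bound $\|S_\rho\|\le d_\rho^{-1}$; your Riesz-projection existence step and $P_\rho/Q_\rho$ (Feshbach) reduction supply exactly those omitted details, with constants depending only on $\varepsilon_2/d_\rho$. (Minor point: $|\delta\rho|\le\varepsilon_2$ follows from the spectral inclusion $\operatorname{sp}(A+B)\subset\{z:\operatorname{dist}(z,\operatorname{sp}A)\le\|B\|\}$ for self-adjoint $A$, together with $\rho_\kappa$ lying inside $\Gamma$, rather than from Weyl's ordered-eigenvalue inequality, but the conclusion is right.)
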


\begin{proof}
This is the standard non-degenerate perturbation expansion for a compact self-adjoint operator, applied to the decomposition
\begin{align*}
    \mathcal K_D^{(0)}
    =
    \mathcal K_{\mathrm{diag}}^{(0)}
    +
    \mathcal K_{\mathrm{off}}^{(0)}.
\end{align*}
The estimate follows from the reduced resolvent bound
\(\|S_\rho\|_{L^2\to L^2}\leq d_\rho^{-1}\).
\end{proof}

\begin{remark}
If the isolated eigenmode is supported on one resonator, for instance
\begin{align*}
    \Phi=(f_j^{(1)},0)^T,
\end{align*}
then
\begin{align*}
    \mathcal K_{\mathrm{off}}^{(0)}\Phi
    =
    (0,\mathcal K_{21}^{(0)}f_j^{(1)})^T.
\end{align*}
Hence
\begin{align*}
    \left\langle
    \mathcal K_{\mathrm{off}}^{(0)}\Phi,\Phi
    \right\rangle_{L^2(D)}
    =
    0,
\end{align*}
and the first-order eigenvalue shift vanishes. In this case,
\begin{align}
    |\delta\rho|
    \leq
    C\frac{\varepsilon_2^2}{d_\rho}.
    \label{eq:quadratic_eigenvalue_shift}
\end{align}
This is the non-degenerate two-resonator situation used below.
\end{remark}

\begin{remark}
If the isolated eigenvalue has multiplicity larger than one, the same perturbative statement must be written in terms of spectral projections onto the corresponding eigenspace, rather than individual eigenvectors. This is the natural framework for non-radial spherical modes and for identical resonators.
\end{remark}

\section{\texorpdfstring{$C^0$}{} control of the perturbation}
\label{sec:C0_control}

The mean value mechanism requires pointwise information on the coupled eigenmode. The $L^2$ perturbation estimate \eqref{eq:L2_modal_perturbation} is therefore not sufficient on its own: we must control the perturbation in $C^0(\overline D)$.

We introduce the operator norms
\begin{align}
    \varepsilon_\infty
    :=
    \|\mathcal K_{\mathrm{off}}^{(0)}\|_{L^2\to C^0},
    \qquad
    C_0
    :=
    \|\mathcal K_{\mathrm{diag}}^{(0)}\|_{L^2\to C^0}.
    \label{eq:epsinf_C0_def}
\end{align}
The Newtonian potential maps $L^2(D)$ continuously into $C^0(\overline D)$ for bounded three-dimensional domains. %In particular, the quantities in \eqref{eq:epsinf_C0_def} are finite for every fixed configuration. 
In particular, it is continuous in \(x\), and therefore the preceding
\(L^2\to L^\infty\) bounds imply corresponding \(L^2\to C^0\) bounds on compact
subsets. For the off-diagonal part, the estimates of Appendix~\ref{app:Linfty-control}
give
\begin{align}
    \varepsilon_\infty
    \leq
    \min\left\{
    \frac12
    \left(\frac{\max(R_1,R_2)}{\pi}\right)^{1/2},
    \frac{\max(|D_1|,|D_2|)^{1/2}}{4\pi\kappa}
    \right\}.
    \label{eq:epsilon_infty_bound}
\end{align}
The second bound follows directly from the pointwise estimate
\begin{align*}
    |x-y|^{-1}\leq \kappa^{-1},
    \qquad x\in D_i,\ y\in D_j,\ i\neq j.
\end{align*}
The first bound shows that $\varepsilon_\infty$ remains uniformly bounded as
$\kappa\to0$, while the second shows that the off-diagonal $L^2\to C^0$
norm becomes small as the separation grows.

\begin{proposition}[Direct $C^0$ estimate]
\label{prop:directC0}
Let $\Phi_\kappa=\Phi+\delta\phi$ and $\rho_\kappa=\rho+\delta\rho$ be as in Theorem~\ref{thm:3D_modal_expansion}. Assume that
\begin{align}
    \frac{|\delta\rho|}{|\rho|}
    \leq
    \frac12.
    \label{eq:eigenvalue_shift_small}
\end{align}
Then
\begin{align}
    \|\delta\phi\|_{C^0(\overline D)}
    \leq
    \frac{2}{|\rho|}
    \left(
    C_0\|\delta\phi\|_{L^2(D)}
    +
    \varepsilon_\infty\|\Phi\|_{L^2(D)}
    +
    \varepsilon_\infty\|\delta\phi\|_{L^2(D)}
    +
    |\delta\rho|\|\Phi\|_{C^0(\overline D)}
    \right).
    \label{eq:direct_C0_estimate}
\end{align}
\end{proposition}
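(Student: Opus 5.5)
The plan is to derive an algebraic identity for $\delta\phi$ from the two eigenvalue equations and then bound each term in $C^0$ using the $L^2\to C^0$ operator norms $C_0$ and $\varepsilon_\infty$ from \eqref{eq:epsinf_C0_def}. The key point is that the eigenvalue equation expresses $\Phi_\kappa$ as $\rho_\kappa^{-1}$ times the image of an $L^2$ function under $\mathcal K_D^{(0)}$. This operator is smoothing from $L^2$ into $C^0$, so the pointwise size of $\delta\phi$ is controlled by $L^2$ quantities, which Theorem~\ref{thm:3D_modal_expansion} already bounds.

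First, subtract $\mathcal K_{\mathrm{diag}}^{(0)}\Phi=\rho\Phi$ from $(\mathcal K_{\mathrm{diag}}^{(0)}+\mathcal K_{\mathrm{off}}^{(0)})(\Phi+\delta\phi)=(\rho+\delta\rho)(\Phi+\delta\phi)$. This gives
\begin{align*}
    \rho_\kappa\,\delta\phi
    =
    \mathcal K_{\mathrm{diag}}^{(0)}\delta\phi
    +\mathcal K_{\mathrm{off}}^{(0)}\Phi
    +\mathcal K_{\mathrm{off}}^{(0)}\delta\phi
    -\delta\rho\,\Phi .
\end{align*}
The right-hand side consists of three operator images of $L^2$ functions plus a multiple of $\Phi$. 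Here $\Phi$ itself lies in $C^0(\overline D)$, since $\Phi=\rho^{-1}\mathcal K_{\mathrm{diag}}^{(0)}\Phi$ and $\rho\neq0$. It follows that $\delta\phi\in C^0(\overline D)$, so the $C^0$ norm on the left is meaningful.

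Next, take $C^0(\overline D)$ norms. The first term is bounded by $C_0\|\delta\phi\|_{L^2}$, the second by $\varepsilon_\infty\|\Phi\|_{L^2}$, the third by $\varepsilon_\infty\|\delta\phi\|_{L^2}$, and the last by $|\delta\rho|\,\|\Phi\|_{C^0}$. To handle the left-hand side, use assumption \eqref{eq:eigenvalue_shift_small}:
\[
|\rho_\kappa|\ge|\rho|-|\delta\rho|\ge|\rho|/2 .
\]
Dividing by $|\rho_\kappa|$ then yields the factor $2/|\rho|$ and hence \eqref{eq:direct_C0_estimate}.

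The only delicate point is a functional-analytic one: making sure the operator norms in \eqref{eq:epsinf_C0_def} are finite and that the identity holds pointwise rather than merely almost everywhere. The first part follows from the stated $L^2\to C^0$ continuity of the Newtonian potential on bounded domains, together with the bound \eqref{eq:epsilon_infty_bound} for the off-diagonal blocks. For the second, I would note that both sides of the identity are continuous on $\overline D$ once $\delta\phi$ is written through the eigenvalue equation. They agree in $L^2$, and continuous functions that agree almost everywhere agree everywhere, so the identity holds pointwise.
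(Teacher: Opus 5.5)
Your proposal is correct and follows essentially the paper's argument: the same subtracted identity, the same $L^2\to C^0$ bounds via $C_0$ and $\varepsilon_\infty$, and the same use of $|\delta\rho|\leq|\rho|/2$. The only difference is cosmetic. You keep $\rho_\kappa\,\delta\phi$ on the left and divide by $|\rho_\kappa|\geq|\rho|/2$, whereas the paper moves $\delta\rho\,\delta\phi$ to the right and absorbs $|\delta\rho|\,\|\delta\phi\|_{C^0}$ into the left-hand side; both give the identical factor $2/|\rho|$, and your version has the minor advantage of not presupposing $\|\delta\phi\|_{C^0}<\infty$.
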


\begin{proof}
The perturbed and unperturbed equations are
\begin{align*}
    (\rho+\delta\rho)(\Phi+\delta\phi)
    =
    (\mathcal{K}_{\mathrm{diag}}^{(0)} + \mathcal{K}_{\mathrm{off}}^{(0)})(\Phi+\delta\phi),
\end{align*}
and
\begin{align*}
    \rho\Phi = \mathcal{K}_{\mathrm{diag}}^{(0)}\Phi.
\end{align*}
Subtracting gives
\begin{align*}
    \rho\,\delta\phi
    =
    \mathcal{K}_{\mathrm{diag}}^{(0)}\delta\phi
    +
    \mathcal{K}_{\mathrm{off}}^{(0)}\Phi
    +
    \mathcal{K}_{\mathrm{off}}^{(0)}\delta\phi
    -
    \delta\rho\,\Phi
    -
    \delta\rho\,\delta\phi.
\end{align*}
Taking the $C^0(\overline D)$ norm and using
\begin{align*}
    \|\mathcal{K}_{\mathrm{diag}}^{(0)}u\|_{C^0}
    \leq
    C_0\|u\|_{L^2},
    \qquad
    \|\mathcal{K}_{\mathrm{off}}^{(0)}u\|_{C^0}
    \leq
    \varepsilon_\infty\|u\|_{L^2},
\end{align*}
we obtain
\begin{align*}
    \|\delta\phi\|_{C^0}
    \leq
    \frac{1}{|\rho|}
    \left(
    C_0\|\delta\phi\|_{L^2}
    +
    \varepsilon_\infty\|\Phi\|_{L^2}
    +
    \varepsilon_\infty\|\delta\phi\|_{L^2}
    +
    |\delta\rho|\|\Phi\|_{C^0}
    +
    |\delta\rho|\|\delta\phi\|_{C^0}
    \right).
\end{align*}
Since $|\delta\rho|/|\rho|\leq 1/2$, the last term can be absorbed into the left-hand side. This gives the desired estimate.
\end{proof}

\begin{corollary}[Pointwise perturbation estimate]
\label{cor:C0_control}
Assume that $\varepsilon_2\leq c\,d_\rho$, with $c>0$ sufficiently small, and that \eqref{eq:eigenvalue_shift_small} holds. Then
\begin{align}
    \|\delta\phi\|_{C^0(\overline D)}
    \leq
    C\left(
    \frac{C_0}{|\rho|}\frac{\varepsilon_2}{d_\rho}
    +
    \frac{\varepsilon_\infty}{|\rho|}
    +
    \frac{\varepsilon_\infty}{|\rho|}\frac{\varepsilon_2}{d_\rho}
    +
    \frac{C_0}{|\rho|^2}\varepsilon_2
    \right).
    \label{eq:C0_control_general}
\end{align}
\end{corollary}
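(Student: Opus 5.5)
We will obtain the corollary by substituting the spectral bounds of Theorem~\ref{thm:3D_modal_expansion} into the direct estimate \eqref{eq:direct_C0_estimate} of Proposition~\ref{prop:directC0}. Since \eqref{eq:eigenvalue_shift_small} is assumed, that proposition applies, and it suffices to bound the four terms on its right-hand side one at a time. We use throughout the normalization $\|\Phi\|_{L^2(D)}=1$.

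\textbf{Step 1 (the $L^2$ terms).} The hypothesis $\varepsilon_2\le c\,d_\rho$ with $c$ small puts us in the spectral weak-coupling regime \eqref{eq:spectral_wc_eps}. Then \eqref{eq:L2_modal_perturbation} gives $\|\delta\phi\|_{L^2}\le C\varepsilon_2/d_\rho$. This yields
\begin{align*}
    \frac{2C_0}{|\rho|}\|\delta\phi\|_{L^2}\le C\frac{C_0}{|\rho|}\frac{\varepsilon_2}{d_\rho},
    \qquad
    \frac{2\varepsilon_\infty}{|\rho|}\|\delta\phi\|_{L^2}\le C\frac{\varepsilon_\infty}{|\rho|}\frac{\varepsilon_2}{d_\rho}.
\end{align*}
The term $2\varepsilon_\infty\|\Phi\|_{L^2}/|\rho|=2\varepsilon_\infty/|\rho|$ is already in the required form.

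\textbf{Step 2 (the eigenvalue term).} We need a bound on $|\delta\rho|$ that is linear in $\varepsilon_2$ and needs no assumption on the structure of $\Phi$. For this we avoid \eqref{eq:eigenvalue_shift} and the quadratic remark, and use the crude estimate $|\delta\rho|\le \|\mathcal K^{(0)}_{\mathrm{off}}\|_{L^2\to L^2}=\varepsilon_2$. This holds because both operators are self-adjoint: the perturbed eigenvalue is within $\varepsilon_2$ of $\rho$ by Weyl's inequality. Alternatively, it follows from \eqref{eq:eigenvalue_shift} via $|\langle \mathcal K^{(0)}_{\mathrm{off}}\Phi,\Phi\rangle|\le\varepsilon_2$ and $\varepsilon_2^2/d_\rho\le c\,\varepsilon_2$.

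\textbf{Step 3 (the sup norm of $\Phi$).} We bound $\|\Phi\|_{C^0}$ through the eigen-equation. Since $\Phi=\rho^{-1}\mathcal K^{(0)}_{\mathrm{diag}}\Phi$, we get
\begin{align*}
    \|\Phi\|_{C^0}\le \frac{C_0}{|\rho|}\|\Phi\|_{L^2}=\frac{C_0}{|\rho|}.
\end{align*}
Hence $\frac{2}{|\rho|}|\delta\rho|\,\|\Phi\|_{C^0}\le 2\frac{C_0}{|\rho|^2}\varepsilon_2$, which is the last term of \eqref{eq:C0_control_general}. Collecting the four bounds from Steps 1--3 and absorbing numerical constants into $C$ gives the claim.

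The only delicate point is Step 2. We must make sure the bound on $\delta\rho$ is justified by self-adjointness of $\mathcal K^{(0)}_D$, which has a real symmetric kernel. We must also make sure the eigenvalue $\rho_\kappa$ selected in Theorem~\ref{thm:3D_modal_expansion} is the one within $\varepsilon_2$ of $\rho$. The gap condition guarantees this, since it is the unique eigenvalue in the disc of radius $d_\rho/2$. Everything else is direct substitution.
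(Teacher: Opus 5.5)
Your proposal is correct and follows the paper's proof: you substitute $\|\delta\phi\|_{L^2}\lesssim\varepsilon_2/d_\rho$, $|\delta\rho|\lesssim\varepsilon_2$ and $\|\Phi\|_{C^0}\le C_0/|\rho|$ (obtained from $\Phi=\rho^{-1}\mathcal K^{(0)}_{\mathrm{diag}}\Phi$) into Proposition~\ref{prop:directC0}, exactly as the paper does. Your justification of the linear eigenvalue bound, either by self-adjointness or by \eqref{eq:eigenvalue_shift} with $\varepsilon_2^2/d_\rho\le c\,\varepsilon_2$, supplies a step that the paper only asserts.
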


\begin{proof}
We apply Proposition~\ref{prop:directC0}. Since $\|\Phi\|_{L^2}=1$ and
\begin{align*}
    \Phi=\rho^{-1}\mathcal{K}_{\mathrm{diag}}^{(0)}\Phi,
\end{align*}
we have
\begin{align*}
    \|\Phi\|_{C^0}
    \leq
    \frac{C_0}{|\rho|}.
\end{align*}
Using the $L^2$ perturbation bound
\begin{align*}
    \|\delta\phi\|_{L^2}
    \leq
    C_1\frac{\varepsilon_2}{d_j},
\end{align*}
and the eigenvalue perturbation bound
\begin{align*}
    |\delta\rho|\leq C_2\varepsilon_2,
\end{align*}
gives the result.
\end{proof}

In the non-degenerate two-resonator setting, the first-order eigenvalue correction vanishes, as observed in \eqref{eq:quadratic_eigenvalue_shift}. We therefore obtain the sharper form used in the hotspot argument.

\begin{corollary}[Sharper pointwise estimate]
\label{cor:C0_control_sharper}
Assume in addition that
\begin{align*}
    |\delta\rho|
    =
    O\left(
    \frac{\varepsilon_2^2}{d_\rho}
    \right).
\end{align*}
Then
\begin{align}
    \|\delta\phi\|_{C^0(\overline D)}
    \leq
    C\left(
    \frac{C_0}{|\rho|}\frac{\varepsilon_2}{d_\rho}
    +
    \frac{\varepsilon_\infty}{|\rho|}
    +
    \frac{\varepsilon_\infty}{|\rho|}\frac{\varepsilon_2}{d_\rho}
    +
    \frac{C_0}{|\rho|^2}\frac{\varepsilon_2^2}{d_\rho}
    \right).
    \label{eq:C0_control_sharper}
\end{align}
where $C$ is independent of $\varepsilon_2$ and $\varepsilon_\infty$.
\end{corollary}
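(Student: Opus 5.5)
The plan is to repeat the argument of Corollary~\ref{cor:C0_control}, feeding into Proposition~\ref{prop:directC0} the sharper eigenvalue bound instead of $|\delta\rho|\le C_2\varepsilon_2$. The first task is to check that the hypothesis \eqref{eq:eigenvalue_shift_small} of the Proposition holds. By assumption $|\delta\rho|\le C_3\varepsilon_2^2/d_\rho$, and we are in the regime $\varepsilon_2\le c\,d_\rho$. Hence
\begin{align*}
    |\delta\rho|\le C_3 c\,\varepsilon_2 .
\end{align*}
Since $\varepsilon_2\le c\,d_\rho$ and $d_\rho\le|\rho|$ up to a fixed constant, this gives $|\delta\rho|/|\rho|\le 1/2$ once $c$ is small enough. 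For this step I would either include \eqref{eq:eigenvalue_shift_small} among the standing assumptions, as in Corollary~\ref{cor:C0_control}, or argue from $0$ being an accumulation point of the compact spectrum, which gives $d_\rho\le|\rho|$.

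With the hypothesis in place, I apply \eqref{eq:direct_C0_estimate} and bound its four terms in turn. I use three inputs:
\begin{itemize}
    \item the normalization $\|\Phi\|_{L^2}=1$;
    \item the bound $\|\Phi\|_{C^0}\le C_0/|\rho|$, obtained from $\Phi=\rho^{-1}\mathcal K_{\mathrm{diag}}^{(0)}\Phi$ exactly as in the proof of Corollary~\ref{cor:C0_control};
    \item the $L^2$ bound $\|\delta\phi\|_{L^2}\le C_1\varepsilon_2/d_\rho$ from \eqref{eq:L2_modal_perturbation}.
\end{itemize}
These turn the first three terms into
\begin{align*}
    \frac{C_0}{|\rho|}\frac{\varepsilon_2}{d_\rho},\qquad
    \frac{\varepsilon_\infty}{|\rho|},\qquad
    \frac{\varepsilon_\infty}{|\rho|}\frac{\varepsilon_2}{d_\rho}.
\end{align*}
The last term becomes
\begin{align*}
    \frac{2}{|\rho|}\,|\delta\rho|\,\|\Phi\|_{C^0}\le 2C_3\,\frac{C_0}{|\rho|^2}\,\frac{\varepsilon_2^2}{d_\rho}.
\end{align*}
Collecting the four contributions and setting $C=2\max(C_0',C_1,1,C_3)$, where $C_0'$ is any constant absorbed in the first term, gives \eqref{eq:C0_control_sharper}.

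The only delicate point is the claim that $C$ does not depend on $\varepsilon_2$ or $\varepsilon_\infty$. The constant $C_1$ comes from Theorem~\ref{thm:3D_modal_expansion}, and $C_3$ is the implicit constant in the assumed $O(\varepsilon_2^2/d_\rho)$. Both are uniform as long as $\varepsilon_2/d_\rho\le c$, since they come from the Neumann-series and reduced-resolvent argument, which only uses $\|S_\rho\|\le d_\rho^{-1}$. So I would remark that these constants are uniform in that regime. Beyond this, the proof is a direct substitution, and I expect no real obstacle.
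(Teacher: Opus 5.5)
Your proposal is correct and follows the paper's proof: apply Proposition~\ref{prop:directC0} and substitute $\|\Phi\|_{L^2}=1$, $\|\Phi\|_{C^0}\le C_0/|\rho|$, the $L^2$ bound on $\delta\phi$, and the sharper bound $|\delta\rho|\le C\varepsilon_2^2/d_\rho$. The only addition is that you derive hypothesis \eqref{eq:eigenvalue_shift_small} from $d_\rho\le|\rho|$ (valid because $0$ lies in the spectrum of the compact operator $\mathcal K_{\mathrm{diag}}^{(0)}$), whereas the paper simply carries it over as a standing assumption from Corollary~\ref{cor:C0_control}.
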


\begin{proof}
The proof is the same direct estimate as in Corollary~\ref{cor:C0_control}, but using the sharper eigenvalue perturbation bound
\begin{align*}
    |\delta\rho|
    \leq
    C\frac{\varepsilon_2^2}{d_\rho}.
\end{align*}
Indeed, Proposition~\ref{prop:directC0} gives
\begin{align*}
    \|\delta\phi\|_{C^0(\overline D)}
    \leq
    \frac{2}{|\rho|}
    \left(
    C_0\|\delta\phi\|_{L^2(D)}
    +
    \varepsilon_\infty\|\Phi\|_{L^2(D)}
    +
    \varepsilon_\infty\|\delta\phi\|_{L^2(D)}
    +
    |\delta\rho|\|\Phi\|_{C^0(\overline D)}
    \right).
\end{align*}
Using \(\|\Phi\|_{L^2(D)}=1\),
\begin{align*}
    \|\delta\phi\|_{L^2(D)}
    \leq
    C\frac{\varepsilon_2}{d_\rho},
\end{align*}
and
\begin{align*}
    \|\Phi\|_{C^0(\overline D)}
    \leq
    \frac{C_0}{|\rho|},
\end{align*}
we obtain
\begin{align*}
    \|\delta\phi\|_{C^0(\overline D)}
    \leq
    C\left(
    \frac{C_0}{|\rho|}\frac{\varepsilon_2}{d_\rho}
    +
    \frac{\varepsilon_\infty}{|\rho|}
    +
    \frac{\varepsilon_\infty}{|\rho|}\frac{\varepsilon_2}{d_\rho}
    +
    \frac{C_0}{|\rho|^2}\frac{\varepsilon_2^2}{d_\rho}
    \right).
\end{align*}
This proves the estimate.
\end{proof}

\paragraph*{Pointwise control.}

The preceding estimate gives the pointwise control needed in the proof of Theorem~\ref{thm:hotspot}. Indeed,
\begin{align*}
    \Phi_\kappa=\Phi+\delta\phi,
\end{align*}
with
\begin{align*}
    \|\delta\phi\|_{C^0}
    \leq
    C\left(
    \frac{C_0}{|\rho|}\frac{\varepsilon_2}{d_\r}
    +
    \frac{\varepsilon_\infty}{|\rho|}
    +
    \frac{\varepsilon_\infty}{|\rho|}\frac{\varepsilon_2}{d_\r}
    +
    \frac{C_0}{|\rho|^2}\frac{\varepsilon_2^2}{d_\r}
    \right).
\end{align*}
Thus, whenever
\begin{align*}
    \frac{C_0}{|\rho|}\frac{\varepsilon_2}{d_\r}
    +
    \frac{\varepsilon_\infty}{|\rho|}
    +
    \frac{\varepsilon_\infty}{|\rho|}\frac{\varepsilon_2}{d_\r}
    +
    \frac{C_0}{|\rho|^2}\frac{\varepsilon_2^2}{d_\r}
    \ll 1,
\end{align*}
the coupled eigenmode remains uniformly close to the isolated eigenmode. In particular, if the isolated mode has a non-vanishing boundary contrast at the closest points $x_1\in\partial D_1$ and $x_2\in\partial D_2$, then this contrast persists under the perturbation. The mean value argument in the proof of Theorem~\ref{thm:hotspot} then gives the gradient field amplification estimate.

\section{Mean value mechanism and gradient field amplification}
\label{sec:MVT}

We now explain how the pointwise perturbation control obtained in Section~\ref{sec:C0_control} produces gradient hotspots. Let $x_1\in\partial D_1$ and $x_2\in\partial D_2$ be the closest boundary points, so that
\begin{align*}
    |x_2-x_1|=\kappa.
\end{align*}
The mechanism is geometric: if a weakly coupled mode keeps a non-vanishing contrast between the two facing boundary points, then the field must transition across a region of width $\kappa$.

\begin{theorem}[Gradient hotspot formation in the weak-coupling regime]
\label{thm:hotspot}
Let $\Phi$ be an $L^2$-normalized isolated eigenmode of $\mathcal K_{\mathrm{diag}}^{(0)}$, associated with a simple isolated eigenvalue $\rho$, and let $\Phi_\kappa=\Phi+\delta\phi$ be the corresponding coupled eigenmode of $\mathcal K_D^{(0)}$. Assume that the weak-coupling condition
\begin{align*}
    \frac{\varepsilon_2}{d_\rho}\ll1
\end{align*}
holds and that the pointwise perturbation satisfies the estimate \eqref{eq:C0_control_sharper}. Assume moreover that the isolated mode has a non-vanishing contrast at the closest boundary points:
\begin{align}
    |\Phi(x_2)-\Phi(x_1)|
    \geq
    c_*
    >
    0.
    \label{eq:isolated_boundary_contrast}
\end{align}
If
\begin{align}
    \|\Phi_\kappa-\Phi\|_{C^0(\overline D)}
    \leq
    \frac{c_*}{4},
    \label{eq:small_C0_for_contrast}
\end{align}
then there exists a point $y$ on the segment joining $x_1$ and $x_2$ such that
\begin{align}
    |\nabla \Phi_\kappa(y)|
    \geq
    \frac{c_*}{2\kappa}.
    \label{eq:gradient_hotspot_bound}
\end{align}
\end{theorem}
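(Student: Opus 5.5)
The argument has two steps: first transfer the boundary contrast from $\Phi$ to $\Phi_\kappa$ using the $C^0$ bound, then convert it into a gradient lower bound with a mean value argument along the segment $[x_1,x_2]$.

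\textbf{Step 1: persistence of the contrast.} By the triangle inequality and \eqref{eq:small_C0_for_contrast},
\begin{align*}
    |\Phi_\kappa(x_2)-\Phi_\kappa(x_1)|
    \geq
    |\Phi(x_2)-\Phi(x_1)|
    -|\delta\phi(x_2)|-|\delta\phi(x_1)|
    \geq c_* - \frac{c_*}{4}-\frac{c_*}{4}=\frac{c_*}{2}.
\end{align*}
Pointwise evaluation at $x_1,x_2\in\partial D$ is legitimate because $\Phi_\kappa$ and $\Phi$ lie in $C^0(\overline D)$. This follows from $\Phi_\kappa=\rho_\kappa^{-1}\mathcal K_D^{(0)}\Phi_\kappa$, the $L^2\to C^0$ mapping property of the Newtonian potential, and $\rho_\kappa\neq0$ by \eqref{eq:eigenvalue_shift_small}.

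\textbf{Step 2: mean value across the gap.} The segment $[x_1,x_2]$ lies in the gap $\mathbb R^3\setminus D$ (its interior is outside $\overline D$, since $x_1,x_2$ realise the distance). To take gradients there, I extend $\Phi_\kappa$ to all of $\mathbb R^3$ by the same formula,
\begin{align*}
    \Phi_\kappa(x):=\rho_\kappa^{-1}\int_D G^0(x-y)\Phi_\kappa(y)\,\upd y, \qquad x\in\mathbb R^3.
\end{align*}
This is the natural field outside the particles and agrees with the Lippmann--Schwinger reconstruction at leading order. Since $\Phi_\kappa\in L^2(D)$, its Newtonian potential lies in $W^{2,2}_{\mathrm{loc}}(\mathbb R^3)\subset C^{1}$ away from $\partial D$. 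It is $C^{0,\alpha}$ globally and harmonic outside $\overline D$, and its gradient is continuous up to $\partial D$ in the sense needed on the closed segment.

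Set $g(t)=\Phi_\kappa(x_1+t(x_2-x_1))$ for $t\in[0,1]$. Then $g$ is continuous on $[0,1]$ and $C^1$ on $(0,1)$. If $\Phi$ is complex valued, I apply the scalar mean value theorem to $\mathrm{Re}(\bar\theta g)$, where $\theta$ is the unit complex number with $\bar\theta\,(g(1)-g(0))=|g(1)-g(0)|$. This gives $t^*\in(0,1)$ with
\begin{align*}
    \frac{c_*}{2}\leq |g(1)-g(0)| = \mathrm{Re}\big(\bar\theta g'(t^*)\big)\leq |\nabla\Phi_\kappa(y)|\,|x_2-x_1|,
    \qquad y=x_1+t^*(x_2-x_1).
\end{align*}
Dividing by $|x_2-x_1|=\kappa$ gives \eqref{eq:gradient_hotspot_bound}.

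\textbf{Main obstacle.} The delicate point is regularity, not the inequality chain. One must justify that $\Phi_\kappa$ makes sense and is differentiable on the open segment, which requires the extension above. One must also justify that the boundary values used in Step 1 coincide with the limits of the extended function, which holds because the Newtonian potential of an $L^2$ density is continuous across $\partial D$. I would settle this first via the potential representation. The only remaining check is that the weak-coupling hypothesis, through Corollary~\ref{cor:C0_control_sharper}, is what makes \eqref{eq:small_C0_for_contrast} attainable, so that the theorem is not vacuous.
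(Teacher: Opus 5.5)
Your proof is correct and matches the paper's argument: the triangle inequality with $\|\Phi_\kappa-\Phi\|_{C^0(\overline D)}\le c_*/4$ preserves a contrast of at least $c_*/2$, and the mean value theorem on $[x_1,x_2]$ turns it into the $c_*/(2\kappa)$ gradient bound. Your extension of $\Phi_\kappa$ by $\rho_\kappa^{-1}\mathcal K_D^{(0)}\Phi_\kappa$ and the rotation trick for complex values fill in details the paper leaves implicit; only continuity on the closed segment and harmonicity on the open segment are needed, so your aside $W^{2,2}_{\mathrm{loc}}\subset C^1$ (false in three dimensions) is not used.
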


\begin{proof}
By the triangle inequality,
\begin{align*}
    |\Phi_\kappa(x_2)-\Phi_\kappa(x_1)|
    &\geq
    |\Phi(x_2)-\Phi(x_1)|
    -
    |\delta\phi(x_2)|
    -
    |\delta\phi(x_1)| \\
    &\geq
    c_*
    -
    2\|\delta\phi\|_{C^0(\overline D)}.
\end{align*}
Using \eqref{eq:small_C0_for_contrast}, we obtain
\begin{align*}
    |\Phi_\kappa(x_2)-\Phi_\kappa(x_1)|
    \geq
    \frac{c_*}{2}.
\end{align*}
Applying the mean value theorem along the segment joining $x_1$ and $x_2$, there exists a point $y$ on this segment such that
\begin{align*}
    |\nabla \Phi_\kappa(y)|
    \geq
    \frac{|\Phi_\kappa(x_2)-\Phi_\kappa(x_1)|}{|x_2-x_1|}
    \geq
    \frac{c_*}{2\kappa}.
\end{align*}
This proves the result.
\end{proof}

The theorem shows that gradient amplification follows from two ingredients. The first is modal: the isolated mode must create a nonzero contrast between the two facing sides of the resonators. The second is perturbative: the coupled mode must remain pointwise close enough to the isolated mode for this contrast to survive. In that regime, the narrow geometry converts the persistent contrast into a gradient of order $\kappa^{-1}$.

\subsection{Physical interpretation}

The overall picture is therefore the following. For moderately small gaps, the resonators remain weakly coupled. The dominant resonant mode retains the structure of an isolated resonator mode, and the field values on the two facing boundaries remain significantly different. The narrow gap then converts this modal contrast into a large gradient field through the mean value mechanism.

For sufficiently small gaps, however, the interaction between the resonators becomes too strong for the perturbative weak-coupling description to remain valid. The resonant mode reorganizes across the coupled structure, the boundary contrast decreases, and the inverse-gap amplification mechanism saturates.

Thus, gradient hotspots in the present three-dimensional dielectric setting should be understood as strong but bounded concentration phenomena generated by weakly coupled resonant interactions. The apparent blow-up observed in the intermediate regime is therefore a pre-asymptotic weak-coupling effect rather than a genuine singularity of the governing equations.

\section{Numerical experiments}
\label{sec:Numerics}

This section provides numerical evidence for the weak-coupling mechanism developed in the preceding sections. Our objectives are threefold. First, we verify the existence of a regime in which the interaction between the two resonators remains perturbative relative to their isolated modal structure. Second, we confirm that this weak-coupling regime produces the inverse-gap amplification predicted by the mean value analysis. Third, we investigate the transition to strong coupling and the associated saturation of the gradient field.

To carry out these computations, we consider two spherical dielectric resonators and exploit the explicit spectral decomposition of the Newtonian potential on a ball. This provides a natural finite-dimensional basis for constructing a projected interaction operator and tracking its spectral evolution as the gap size varies. The resulting framework allows us to compute the coupled eigenmodes, reconstruct the corresponding physical fields, and evaluate the weak-coupling, mean value, and spectral diagnostics introduced in the theoretical analysis.

\subsection{Spectral basis on a sphere}

We first recall the spectral result used in the numerical construction. The following theorem is due to Anderson and Khavinson \cite{anderson1992spectral}. We state it for the Newtonian potential on the unit ball. The corresponding result for a ball of radius $R$ follows by scaling.

\begin{theorem}[Anderson--Khavinson spectral decomposition]
\label{thm:anderson_khavinson}
Let $B\subset \mathbb R^3$ be the unit ball, and let
\begin{align*}
    \mathcal N_B[u](x) := \frac{1}{4\pi} \int_B \frac{u(y)}{|x-y|}\upd y
\end{align*}
be the Newtonian potential on $B$. Then the spectrum of $\mathcal N_B$ is given by
\begin{align*}
    \operatorname{sp}(\mathcal N_B) =
    \left\{ \rho_{s,n} = \frac{1}{j_{s-\frac12,n}^2}; \ s=0,1,\dots,\quad n=1,2,\dots \right\},
\end{align*}
where $j_{\nu,n}$ denotes the $n$-th positive zero of the Bessel function $J_\nu$.

For $s\geq1$, each eigenvalue $\rho_{s,n}$ has multiplicity $2s+1$, and the corresponding eigenspace is spanned by functions of the form
\begin{align*}
    f_{s,m,n}(r,\theta,\phi) = r^{-\frac12} J_{s+\frac12} \bigl(j_{s-\frac12,n}r\bigr) S_{s,m}(\theta,\phi), \qquad m=-s,\dots,s,
\end{align*}
where $S_{s,m}$ are spherical harmonics.

For $s=0$, the eigenvalues are simple, and the corresponding eigenfunctions are spherically symmetric. They are given by
\begin{align*}
    f_{0,n}(r) = C_n r^{-\frac12} J_{\frac12} \bigl(j_{-\frac12,n}r\bigr) = C_n \frac{\sin(j_{-\frac12,n}r)}{r}, \qquad n=1,2,\dots,
\end{align*}
where $C_n$ is a normalization constant. 
\end{theorem}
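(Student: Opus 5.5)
The plan is to turn the integral eigenvalue problem for $\mathcal N_B$ into a Helmholtz problem inside the ball with a nonlocal boundary condition, and then solve that problem by separation of variables.

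\emph{Reduction to a boundary value problem.} The kernel $\frac{1}{4\pi|x-y|}$ is the fundamental solution of $-\Delta$. Hence for $u\in L^2(B)$ the function $v=\mathcal N_B[u]$ satisfies $-\Delta v=u\,\mathbbm{1}_B$ in $\mathbb R^3$, is $C^1$ across $\partial B$, is harmonic outside $\overline B$, and decays like $|x|^{-1}$.
\begin{itemize}
\item Taking $u=0$ gives $v=0$, so the injectivity of $\mathcal N_B$ is immediate.
\item $\mathcal N_B$ is compact, self-adjoint and positive on $L^2(B)$ (positive because $\langle \mathcal N_B u,u\rangle=\int|\nabla v|^2$). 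Its spectrum therefore consists of positive eigenvalues $\lambda$ together with the accumulation point $0$, and the eigenfunctions span $L^2(B)$.
\item If $\mathcal N_B u=\lambda u$, then $u=\lambda^{-1}v$ inside $B$, so $-\Delta u=\mu u$ in $B$ with $\mu=\lambda^{-1}$.
\item Outside $B$, $v$ is the decaying harmonic extension of its trace.
\end{itemize}
The $C^1$ matching then gives an exterior Dirichlet-to-Neumann condition for $u$ on $\partial B$. Conversely, take $u$ solving $-\Delta u=\mu u$ in $B$ with this boundary condition. Define $v$ as $\lambda u$ in $B$ and as the decaying harmonic extension outside. Then $v-\mathcal N_B[u]$ is harmonic on $\mathbb R^3$ and decays at infinity, so it vanishes by Liouville. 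The two problems are therefore equivalent.

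\emph{Separation of variables.} $\mathcal N_B$ commutes with rotations, so it preserves each subspace $\{g(r)S_{s,m}(\theta,\phi)\}$, and it suffices to find the radial eigenfunctions for each degree $s$.
\begin{itemize}
\item Regularity at the origin excludes the second-kind Bessel solution. This forces $g(r)=r^{-1/2}J_{s+1/2}(kr)$, i.e.\ a multiple of the spherical Bessel function $j_s(kr)$, with $k=\sqrt\mu$.
\item The decaying exterior harmonic function with angular part $S_{s,m}$ is $c\,r^{-s-1}S_{s,m}$. The boundary condition therefore reads $g'(1)+(s+1)g(1)=0$.
\item The recurrence $j_s'(x)+\frac{s+1}{x}j_s(x)=j_{s-1}(x)$ turns this into $j_{s-1}(k)=0$, i.e.\ $J_{s-1/2}(k)=0$.
\item Hence $k=j_{s-1/2,n}$, giving $\lambda=\rho_{s,n}=j_{s-1/2,n}^{-2}$ with eigenfunctions $r^{-1/2}J_{s+1/2}(j_{s-1/2,n}r)S_{s,m}$.
\end{itemize}

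\emph{Multiplicity and the radial case.} For fixed $s$, the radial problem is a regular-at-the-origin Sturm--Liouville problem. Each eigenvalue is therefore simple in $g$, and the zeros of $J_{s-1/2}$ are distinct. The multiplicity $2s+1$ comes from the index $m$. One should also check that $\rho_{s,n}\neq\rho_{s',n'}$ for $s\neq s'$, or else state the multiplicity as "at least $2s+1$"; I would try to settle this with Siegel's theorem on the absence of common zeros of Bessel functions of half-integer orders differing by an integer. For $s=0$ we have $J_{-1/2}(x)\propto\cos x/\sqrt x$, so $j_{-1/2,n}=(n-\tfrac12)\pi$ and $r^{-1/2}J_{1/2}(kr)\propto\sin(kr)/r$, which gives the stated radial modes.

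\emph{Main obstacle.} I expect the main difficulty to be completeness: showing that the list $\rho_{s,n}$ exhausts the spectrum. I would argue it as follows.
\begin{itemize}
\item Every eigenspace is rotation invariant, so it decomposes into spherical-harmonic components, each of which is itself an eigenfunction.
\item Each such component must be one of the separated solutions found above.
\item Together with compactness and self-adjointness, this shows the listed eigenfunctions form an orthonormal basis of $L^2(B)$, so no further spectrum exists apart from the accumulation point $0$.
\end{itemize}
Rescaling to a ball of radius $R$ multiplies the eigenvalues by $R^2$.
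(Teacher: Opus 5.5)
The paper does not prove this result; it imports it from Anderson and Khavinson \cite{anderson1992spectral} and uses it only to build the numerical basis. Your argument is a self-contained derivation along the standard route, and it is sound. You rewrite $\mathcal N_B u=\lambda u$ as $-\Delta u=\lambda^{-1}u$ in $B$, with the exterior Dirichlet-to-Neumann condition $g'(1)+(s+1)g(1)=0$ on each degree-$s$ component. You then use $j_s'+\frac{s+1}{x}j_s=j_{s-1}$ to obtain $J_{s-1/2}(k)=0$, and rotation invariance plus compact self-adjointness to rule out any further spectrum.

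Compared with the bare citation, your route shows what the statement actually rests on. The eigenvalues and eigenfunctions follow from separation of variables alone. The exact multiplicity $2s+1$, and the simplicity of the radial eigenvalues, need one more fact: $J_{s-1/2}$ and $J_{s'-1/2}$ share no positive zero when $s\neq s'$. The paper asserts this without comment. You correctly isolate it, but your proposal does not yet prove it.

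\textbf{The missing multiplicity step.} Siegel's algebraic-independence theorem is the wrong tool here, because it explicitly excludes orders with $2\nu$ odd, where $J_\nu$ is elementary. Argue instead as follows.
\begin{enumerate}
\item By the Lommel identity $J_{\nu+m}=R_{m,\nu}J_\nu-R_{m-1,\nu+1}J_{\nu-1}$, and since $J_\nu$ and $J_{\nu-1}$ never vanish together on $(0,\infty)$ (because $J_\nu'=J_{\nu-1}-\frac{\nu}{x}J_\nu$), a common zero $x_0$ of $J_\nu$ and $J_{\nu+m}$ must be a root of $R_{m-1,\nu+1}$.
\item For $\nu=s-\tfrac12\geq-\tfrac12$, $R_{m-1,\nu+1}$ is a nonzero polynomial in $1/x$ with rational coefficients, so $x_0$ is algebraic.
\item Write $\sqrt{\pi x/2}\,J_{n+1/2}(x)=P_n(1/x)\sin x-Q_n(1/x)\cos x$ for $n\geq-1$, with $P_n,Q_n$ polynomials with rational coefficients satisfying $P_n^2+Q_n^2=\frac{\pi x}{2}\bigl(J_{n+1/2}^2+Y_{n+1/2}^2\bigr)>0$.
\item An algebraic zero would then make $e^{2ix_0}$ algebraic, contradicting Lindemann's theorem.
\end{enumerate}

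\textbf{Smaller repairs.}
\begin{itemize}
\item Your injectivity sentence is reversed. What you need is: $\mathcal N_B u=0$ on $B$ implies $u=-\Delta v=0$ in $B$.
\item For general $u\in L^2(B)$, $v$ is only $H^2_{\mathrm{loc}}$, not $C^1$. That is enough to match the traces of $v$ and $\partial_r v$; $C^1$ holds for eigenfunctions after bootstrapping.
\item For $s=0$, the second-kind solution $\cos(kr)/r$ does lie in $L^2(B)$. It must be excluded by the continuity of $u=\lambda^{-1}\mathcal N_B u$, or by the distributional equation at the origin, not by square integrability.
\item As you note, $0$ belongs to the spectrum, so the displayed formula for $\operatorname{sp}(\mathcal N_B)$ describes only the nonzero spectrum.
\end{itemize}
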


In this work, our sign convention for the static Lippmann--Schwinger operator is
\begin{align*}
    \mathcal K_B^{(0)}[u](x) = -\frac{1}{4\pi} \int_B \frac{u(y)}{|x-y|} \upd y = -\mathcal N_B[u](x).
\end{align*}
Therefore, the eigenfunctions are the same as in Theorem~\ref{thm:anderson_khavinson}, while the eigenvalues of $\mathcal K_B^{(0)}$ are the negatives of the Anderson--Khavinson eigenvalues.

For a ball of radius $R$, the eigenvalues scale by $R^2$. Thus, with our sign convention,
\begin{align*}
    \rho_{s,n}^{(\rho)} = -\frac{R^2}{j_{s-\frac12,n}^2}.
\end{align*}

\subsection{Finite-dimensional projected model}

For each isolated sphere $D_i$, $i=1,2$, we retain the first $N$ eigenmodes $\left\{f_1^{(i)},\dots,f_N^{(i)}\right\}$, with corresponding eigenvalues $\left\{\rho_1^{(i)},\dots,\rho_N^{(i)}\right\}$. The lifted basis functions are
\begin{align*}
    \Phi_n^{(1)} =
    \begin{pmatrix}
        f_n^{(1)}\\0
    \end{pmatrix},
    \qquad
    \Phi_n^{(2)} =
    \begin{pmatrix}
        0\\f_n^{(2)}
    \end{pmatrix}.
\end{align*}
We project the coupled operator $\mathcal K_D^{(0)}$ onto the finite-dimensional space
\begin{align*}
    \mathcal{V}_N = \operatorname{span} \left\{ \Phi_1^{(1)},\dots,\Phi_N^{(1)}, \Phi_1^{(2)},\dots,\Phi_N^{(2)} \right\}.
\end{align*}
In this basis, the projected matrix takes the block form
\begin{align}
    \mathbb K_N(\kappa) =
    \begin{pmatrix}
        \mathbb K_{11} & \mathbb K_{12}(\kappa)\\
        \mathbb K_{21}(\kappa) & \mathbb K_{22}
    \end{pmatrix}.
\end{align}
The diagonal blocks are
\begin{align}
    \mathbb K_{11} = \operatorname{diag} \left( \rho_1^{(1)},\dots,\rho_N^{(1)} \right)
    \qquad \text{ and } \qquad
    \mathbb K_{22} = \operatorname{diag} \left( \rho_1^{(2)},\dots,\rho_N^{(2)} \right),
\end{align}
while the off-diagonal entries are computed from
\begin{align}
    \Big(\mathbb K_{12}(\kappa)\Big)_{mn} = \left\langle \mathcal K_{12}^{(0)} f_n^{(2)}, f_m^{(1)} \right\rangle_{L^2(D_1)}
    \qquad \text{ and } \qquad
    \Big(\mathbb K_{21}(\kappa)\Big)_{mn} = \left\langle \mathcal K_{21}^{(0)} f_n^{(1)}, f_m^{(2)} \right\rangle_{L^2(D_2)}.
\end{align}
Unless otherwise stated, we use $N=15$ for each resonator.

The coupled eigenvalues and eigenvectors are obtained by diagonalizing $\mathbb K_N(\kappa)$. The physical field is then reconstructed from the projected eigenvectors using the corresponding isolated eigenmodes. The field and its gradient are evaluated using the closed-form Newtonian potential formulas inside and outside the spheres. This is the reconstruction used in the hotspot visualizations of Figure~\ref{fig:3d_radial_hotspot}.

\subsection{Representative radial mode}

As a first visualization of the reconstructed fields, we consider the radial mode $(s,m,n)=(0,0,0)$. This mode provides a representative example of the gradient hotspot mechanism: the field remains continuous across the two resonators, while the normalized gradient magnitude becomes localized in the gap region. Figure~\ref{fig:3d_radial_hotspot} shows both a near-gap view and a full-sphere view, together with three-dimensional slice visualizations of the same mode.

\begin{figure}
    \centering

    \begin{subfigure}[t]{0.8\textwidth}
        \centering
        \includegraphics[width=\textwidth]{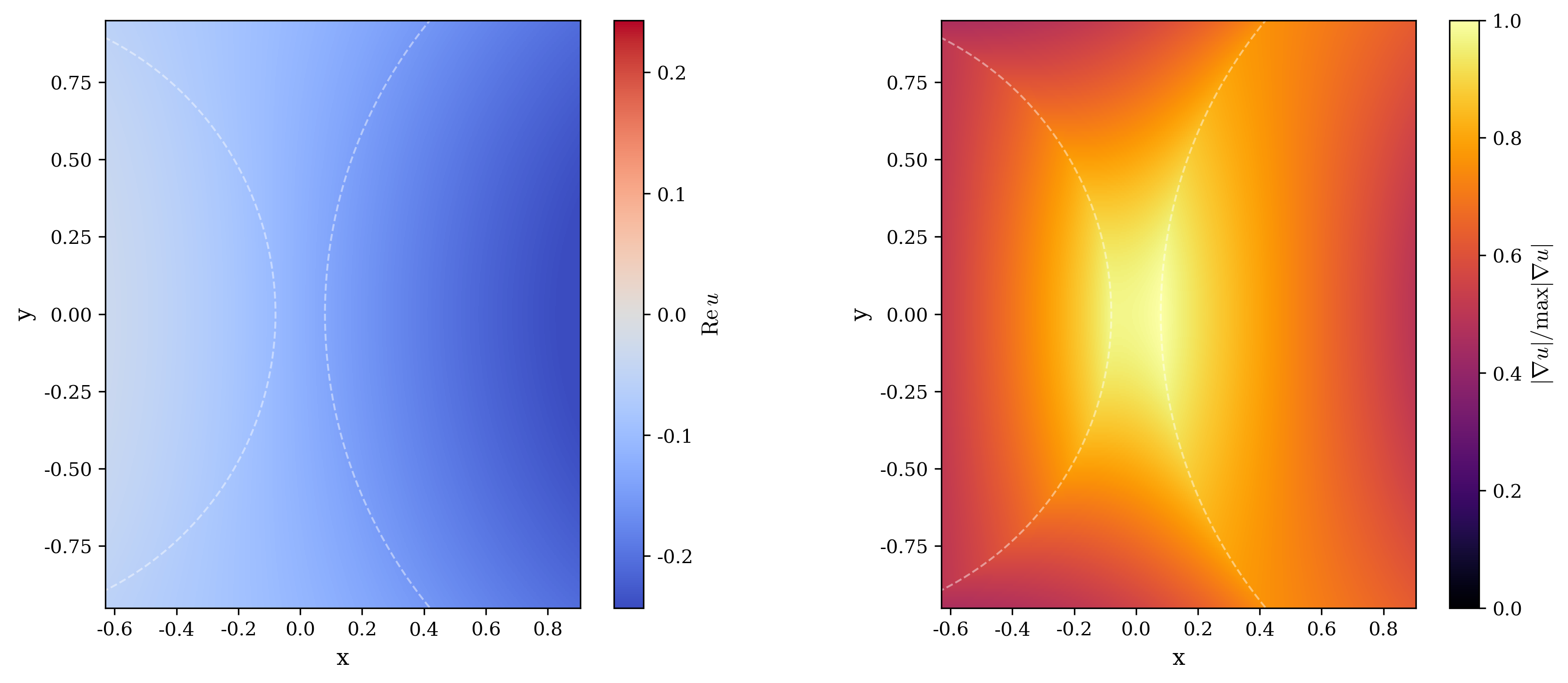}
        \subcaption{Near-gap view.}
    \end{subfigure}

    \vspace{1em}

    \begin{subfigure}[t]{0.92\textwidth}
        \centering
        \includegraphics[width=\textwidth]{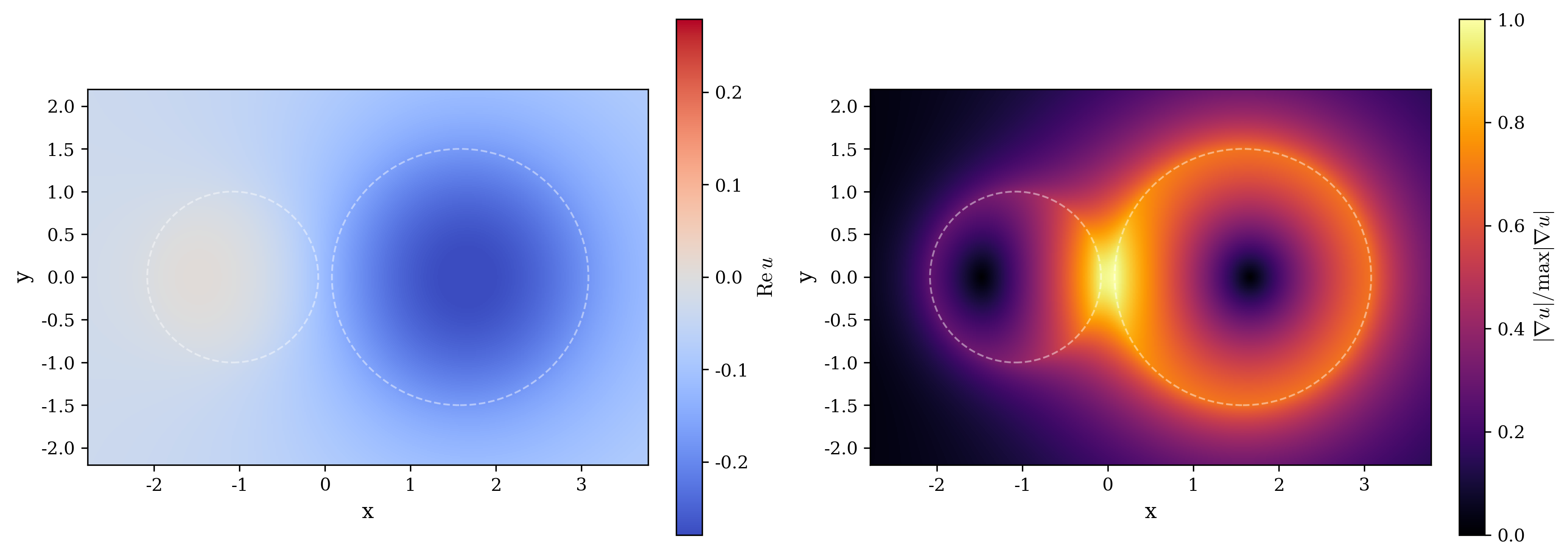}
        \subcaption{Full-sphere view.}
    \end{subfigure}
    
    \vspace{1em}

    \begin{subfigure}[t]{0.45\textwidth}
        \centering
        \includegraphics[width=\linewidth]{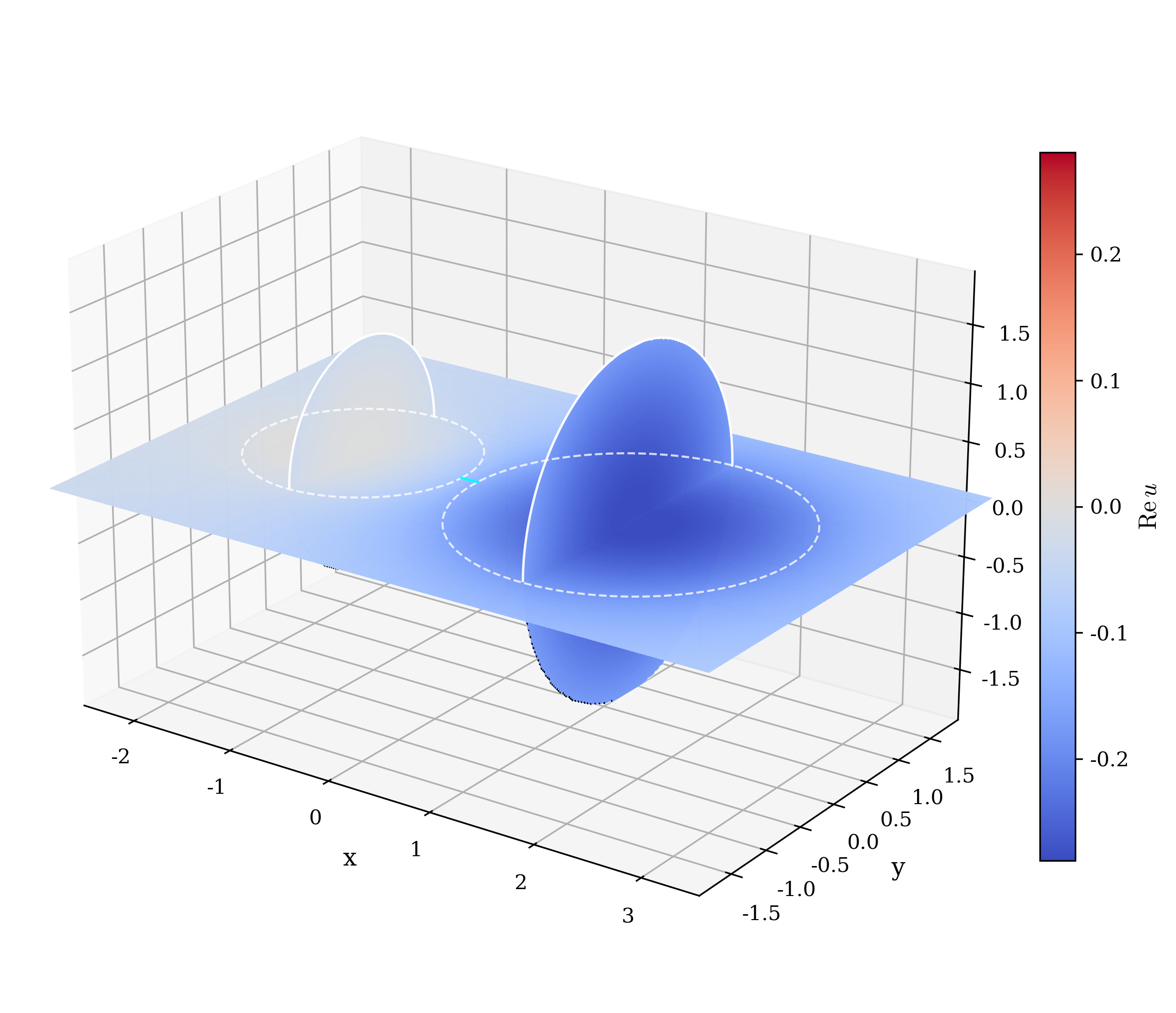}
        \subcaption{Three-dimensional slice visualization of $\Re u$.}
    \end{subfigure}
    \hfill
    \begin{subfigure}[t]{0.45\textwidth}
        \centering
        \includegraphics[width=\linewidth]{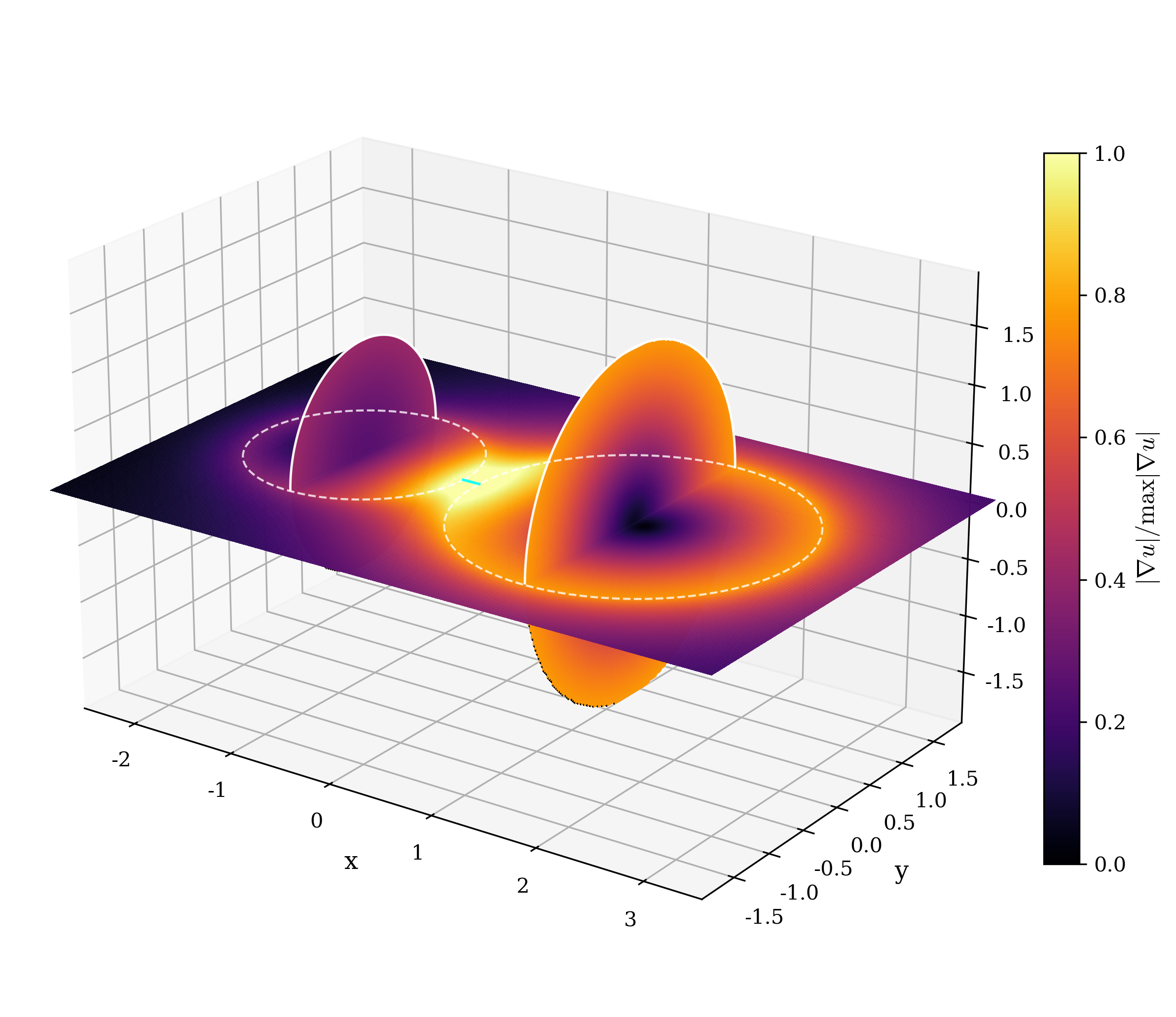}
        \subcaption{Three-dimensional slice visualization of the normalized gradient magnitude.}
    \end{subfigure}
    
    \caption{Gradient hotspot formation for the radial mode $(s,m,n)=(0,0,0)$. The field remains continuous across the resonator boundaries, while the normalized gradient magnitude becomes strongly localized near the gap. The near-gap and full-sphere views show the two-dimensional cross-section, while the lower panels provide three-dimensional slice visualizations of the same reconstructed mode.}
    \label{fig:3d_radial_hotspot}
\end{figure}

\subsection{Numerical identification of the weak-coupling regime}
\label{subsec:numerical identification 3D}

We now explain how the weak-coupling regime is identified numerically. We define the block-norm ratio
\begin{align}\label{eq:ratio_blocks_3D}
    \mathcal R(\kappa) :=
    \frac{ \|\mathbb K_{\mathrm{off}}(\kappa)\|_F }{ \|\mathbb K_{\mathrm{diag}}\|_F },
\end{align}
where
\begin{align*}
    \mathbb K_{\mathrm{diag}} =
    \begin{pmatrix}
        \mathbb K_{11} & 0\\
        0 & \mathbb K_{22}
    \end{pmatrix},
    \qquad
    \mathbb K_{\mathrm{off}}(\kappa) =
    \begin{pmatrix}
        0 & \mathbb K_{12}(\kappa)\\
        \mathbb K_{21}(\kappa) & 0
    \end{pmatrix}.
\end{align*}
The weak-coupling regime corresponds to
\begin{align}
    \mathcal R(\kappa)\ll1.
\end{align}
We also use the spectral-gap diagnostic around a fixed $n\geq1$ eigenpair $\{ \r_n^{(1)}, f_n^{(1)} \}$, as stated in \eqref{eq:spectralgap_3D}, given by
\begin{align}\label{eq:spectral_gap_ratio_3D}
    \mathcal S(\kappa) :=
    \frac{ \|\mathbb K_{\mathrm{off}}(\kappa)\|_F }{ \operatorname{dist} \left( \rho_n^{(1)}, \operatorname{sp}(\mathcal K_{\mathrm{diag}}^{(0)}) \setminus \{\rho_n^{(1)}\} \right) },
\end{align}
%where $\rho_n^{(1)}$ and $\rho_m^{(2)}$ are the isolated eigenvalues associated with the mode family under consideration. 
The perturbative spectral regime corresponds to
\begin{align}
    \mathcal S(\kappa)<1.
\end{align}

Figure~\ref{fig:weak_coupling_diagnostics_3D} displays the weak-coupling diagnostics in the scaled gap variable $\kappa/\delta$, for several values of the subwavelength parameter $\delta k_0$. The left panel shows the block-norm ratio $\mathcal R(\kappa)$ which measures the strength of the interaction between the two resonators relative to their isolated self-interaction. The right panel shows the spectral diagnostic $\mathcal S(\kappa)$ which compares the off-diagonal interaction with the spectral separation of the uncoupled resonators. In both panels, the curves for different values of $\delta k_0$ collapse when plotted against $\kappa/\delta$. This indicates that the relevant geometric parameter controlling the breakdown of weak coupling is not the absolute gap size $\kappa$, but rather the gap measured relative to the particle scale. For large values of $\kappa/\delta$, both diagnostics remain small, showing that the off-diagonal interaction is perturbative and that the coupled modes are well approximated by perturbations of the isolated resonator modes. As $\kappa/\delta$ decreases, the diagnostics increase and eventually become comparable to one, marking the onset of strong coupling. The dashed vertical lines indicate the corresponding transition scales, where the interaction ceases to be small relative either to the diagonal self-interaction or to the relevant spectral separation.

\begin{figure}
    \centering
    \includegraphics[width=0.87\linewidth]{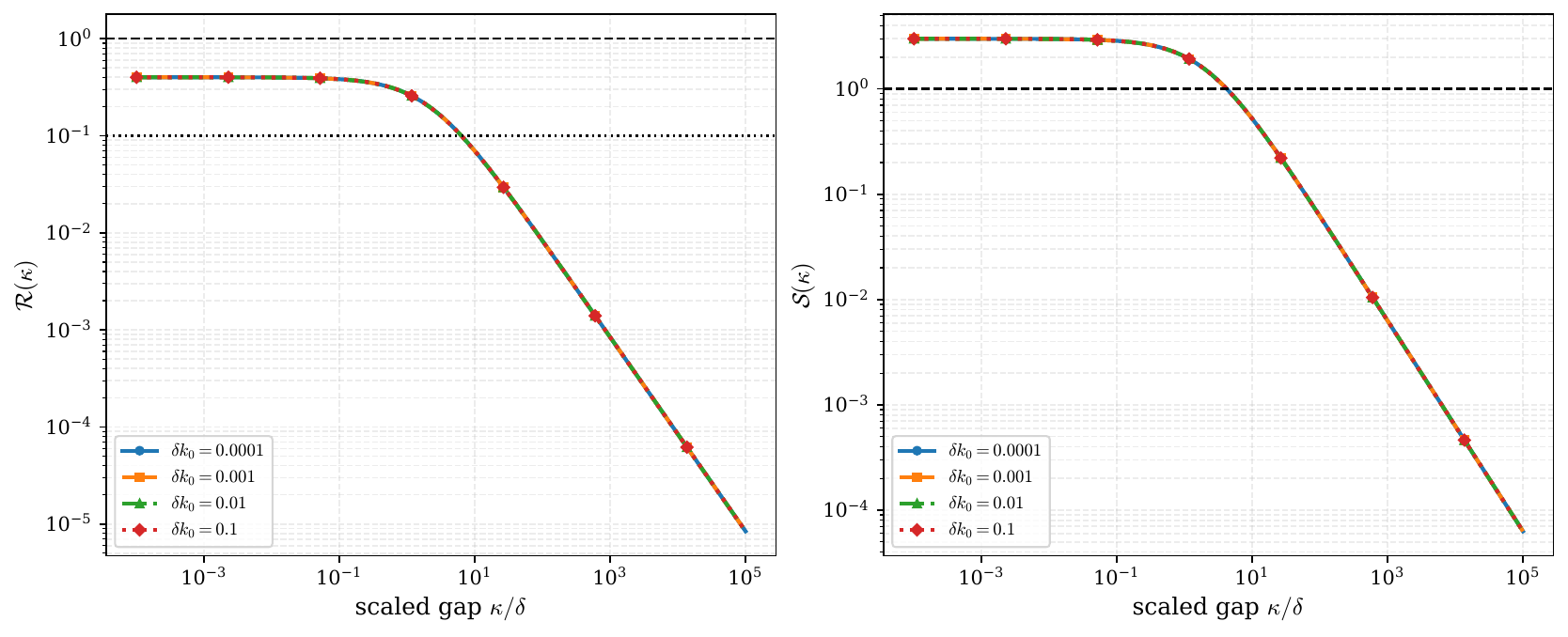}
    \caption{Weak-coupling diagnostics in three dimensions. Left: the ratio $\mathcal R(\kappa)$ measures the relative strength of the inter-particle coupling compared with the isolated self-interaction. 
    Right: the ratio $\mathcal S(\kappa)$ compares the interaction strength with the spectral separation of the isolated resonators. The dashed vertical lines mark the transition values at which the off-diagonal interaction becomes significant, and the threshold $\mathcal S(\kappa)=1$ indicates when the interaction is no longer perturbative relative to the selected spectral gap.}
    \label{fig:weak_coupling_diagnostics_3D}
\end{figure}

\begin{remark}
The collapse of the curves in Figure~\ref{fig:weak_coupling_diagnostics_3D} is a consequence of the scaling used in the numerical construction. The physical distance between the resonators is controlled by the dimensionless ratio $\kappa/\delta$, where $\delta$ is the characteristic particle size. Hence, after rescaling the geometry by $\delta$, configurations with the same value of $\kappa/\delta$ are geometrically similar. Since the static interaction operator $\mathcal K_D^{(0)}$ is homogeneous under spatial dilations, both the diagonal self-interaction and the off-diagonal interaction scale in the same way with $\delta$. Their ratio $\mathcal R(\kappa)$ is therefore primarily a function of the scaled gap $\kappa/\delta$. The same reasoning applies to the spectral diagnostic $\mathcal S(\kappa)$, since the isolated eigenvalues and their spectral separations scale with the same power of $\delta$ as the interaction terms. This explains the collapse of the curves when plotted against the scaled gap $\kappa/\delta$.
\end{remark}

\subsection{Numerical verification of the pointwise control}
\label{subsec:numerical_C0_control}

We now verify numerically the pointwise perturbation estimate proved in Section~\ref{sec:C0_control} and used in Theorem~\ref{thm:hotspot}. For the weakly coupled mode considered in the main numerical experiments, we compute the four contributions
\begin{align*}
    T_1= \frac{C_0}{|\rho|}\frac{\varepsilon_2}{d_\rho},
    \qquad
    T_2= \frac{\varepsilon_\infty}{|\rho|},
    \qquad
    T_3= \frac{\varepsilon_\infty}{|\rho|}\frac{\varepsilon_2}{d_\rho},
    \qquad
    T_4= \frac{C_0}{|\rho|^2}\frac{\varepsilon_2^2}{d_\rho},
\end{align*}
which appear in the estimate
\begin{align*}
    \|\Phi_\kappa-\Phi\|_{C^0}
    \lesssim
    T_1+T_2+T_3+T_4.
\end{align*}
The computation is performed in the same projected basis used throughout Section~\ref{sec:Numerics}, retaining the first fifteen modes on each resonator.

\begin{figure}[ht]
    \centering
    \begin{subfigure}[t]{0.48\linewidth}
        \centering
        \includegraphics[width=\linewidth]{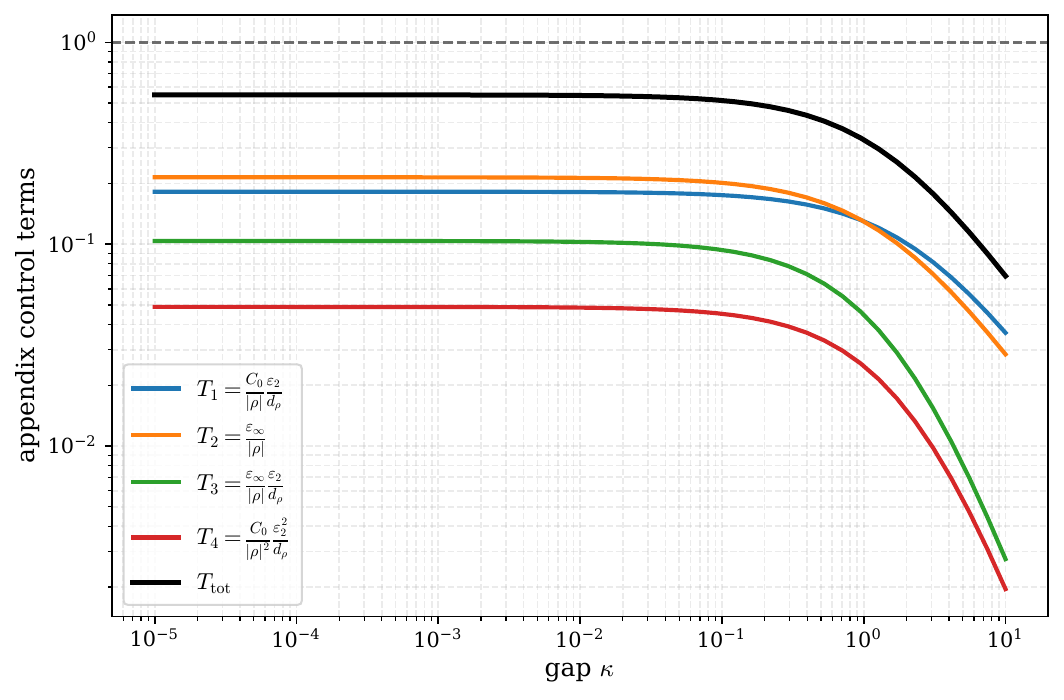}
        \subcaption{Individual contributions to the pointwise control estimate.}
        \label{fig:Linfty_control_terms}
    \end{subfigure}
    \hfill
    \begin{subfigure}[t]{0.48\linewidth}
        \centering
        \includegraphics[width=\linewidth]{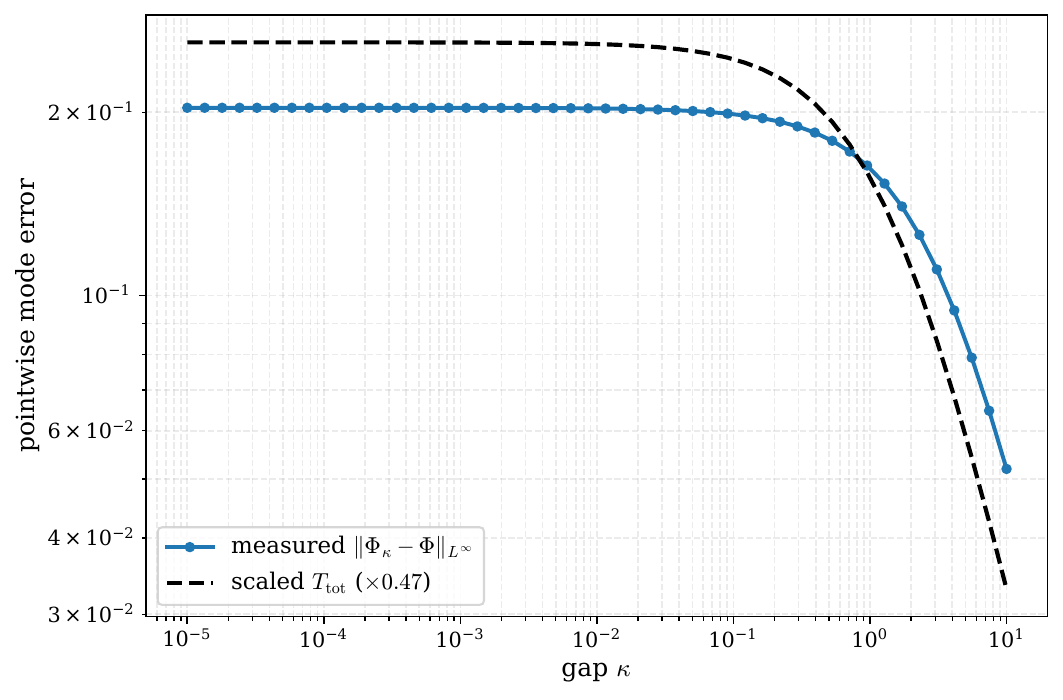}
        \subcaption{Measured pointwise perturbation compared with the predicted control quantity.}
        \label{fig:Linfty_error_vs_control}
    \end{subfigure}

    \caption{Numerical verification of the pointwise control estimate. Panel~\subref{fig:Linfty_control_terms} shows the four terms entering the bound for $\|\Phi_\kappa-\Phi\|_{C^0}$, together with their sum. Panel~\subref{fig:Linfty_error_vs_control} compares the measured pointwise perturbation with the predicted control quantity. The diagnostics show that the coupled mode remains pointwise close to the isolated mode in the weak-coupling regime, while the perturbation increases as the gap closes.}
    \label{fig:Linfty_control_diagnostics}
\end{figure}

Figure~\ref{fig:Linfty_control_diagnostics} confirms the behavior predicted by the pointwise estimate. In Figure~\ref{fig:Linfty_control_terms}, the total control quantity remains bounded throughout the weak-coupling range and increases only gradually as the particles approach each other. This reflects the fact that the off-diagonal interaction becomes stronger as the gap closes, but remains perturbative over the range where the weak-coupling description is valid.

Figure~\ref{fig:Linfty_error_vs_control} compares the measured pointwise perturbation of the coupled eigenmode with the control quantity predicted by the estimate. The two curves follow the same trend: the perturbation remains small for larger gaps, where the coupled mode is close to the isolated resonator mode, and increases as $\kappa$ decreases. This numerical check supports the use of the $C^0$ estimate in Theorem~\ref{thm:hotspot}: the boundary contrast used in the mean value argument persists precisely in the regime where the pointwise perturbation remains controlled.

\subsection{Modal dependence of the gradient hotspot formation}\label{sec:modal_dependence}

The hotspot mechanism described in Section~\ref{sec:MVT} depends strongly on the modal structure of the resonant field. While the narrow gap provides the geometric setting in which concentration may occur, the existence and intensity of a gradient hotspot are determined by the particular weakly coupled mode under consideration.

Figure~\ref{fig:modal_dependence_hotspots} illustrates this dependence for three representative modes. In each panel, we show a horizontal cross-section through the gap region, displaying the real part of the field together with the normalized gradient magnitude. This allows us to compare directly whether the modal structure creates a significant field contrast across the gap.

For the mode $(s,m,n)=(2,0,0)$, shown in Figure~\ref{fig:mode200_hotspot}, the field develops a clear contrast between the two resonators near the points of closest approach. As predicted by the mean value mechanism, this contrast forces a rapid transition across the narrow gap and produces a localized enhancement of the gradient.

A similar behavior is observed for the higher-order mode $(s,m,n)=(4,0,1)$ in Figure~\ref{fig:mode401_hotspot}. Although the internal modal structure is more oscillatory, a significant contrast persists across the gap, leading again to strong localization of the normalized gradient magnitude.

In contrast, Figure~\ref{fig:mode311_hotspot} shows the mode $(s,m,n)=(3,1,1)$. In this case, the field values on the two sides of the gap remain comparatively close, and no significant transition is imposed across the narrow region. As a result, the gradient remains much less concentrated near the gap, and no pronounced hotspot is observed.

These examples demonstrate that gradient hotspot formation is intrinsically modal. The narrow geometry alone is not sufficient to generate strong concentration. Rather, hotspots emerge for those weakly coupled modes whose structure induces a persistent field contrast between the opposing sides of the resonators.

\begin{figure}
    \centering

    \begin{subfigure}[t]{0.32\linewidth}
        \centering
        \includegraphics[width=\linewidth]{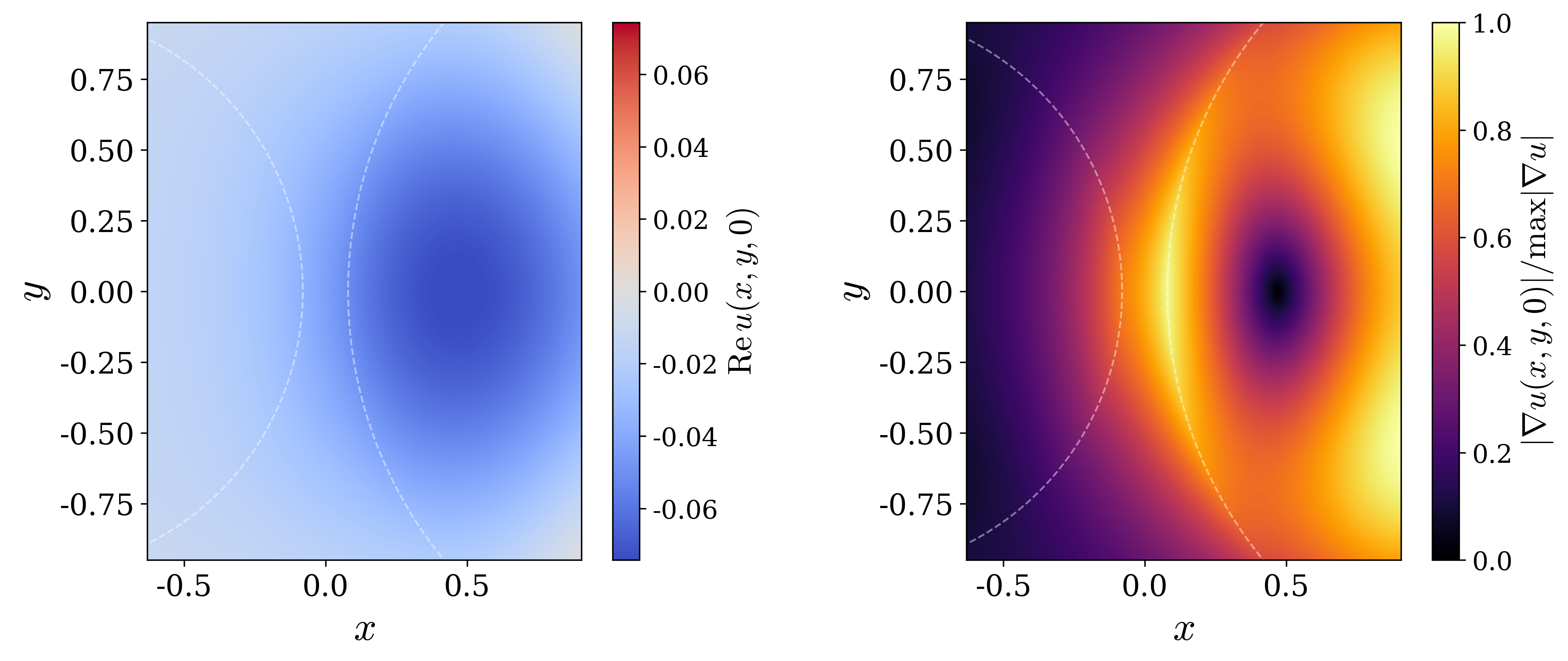}
        \subcaption{Mode $(s,m,n)=(2,0,0)$.}
        \label{fig:mode200_hotspot}
    \end{subfigure}
    \hfill
    \begin{subfigure}[t]{0.32\linewidth}
        \centering
        \includegraphics[width=\linewidth]{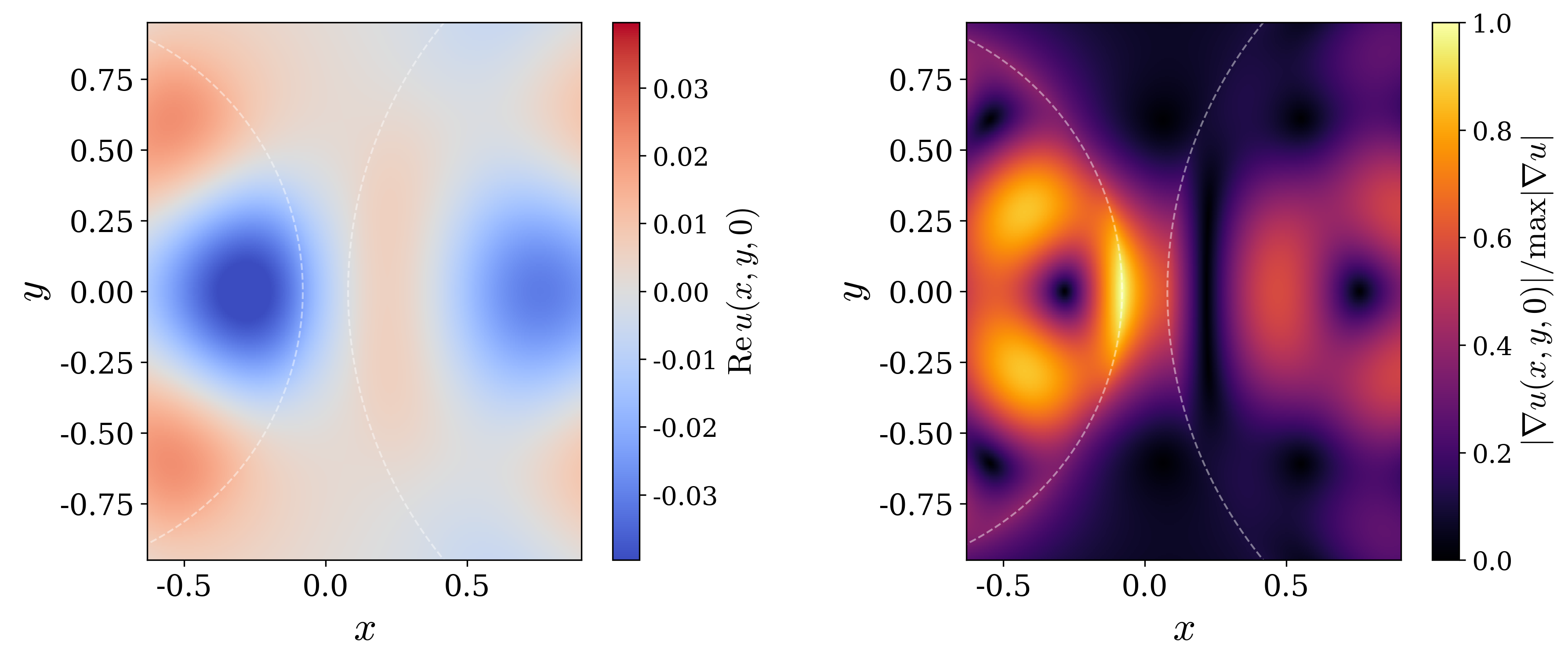}
        \subcaption{Mode $(s,m,n)=(4,0,1)$.}
        \label{fig:mode401_hotspot}
    \end{subfigure}
    \hfill
    \begin{subfigure}[t]{0.32\linewidth}
        \centering
        \includegraphics[width=\linewidth]{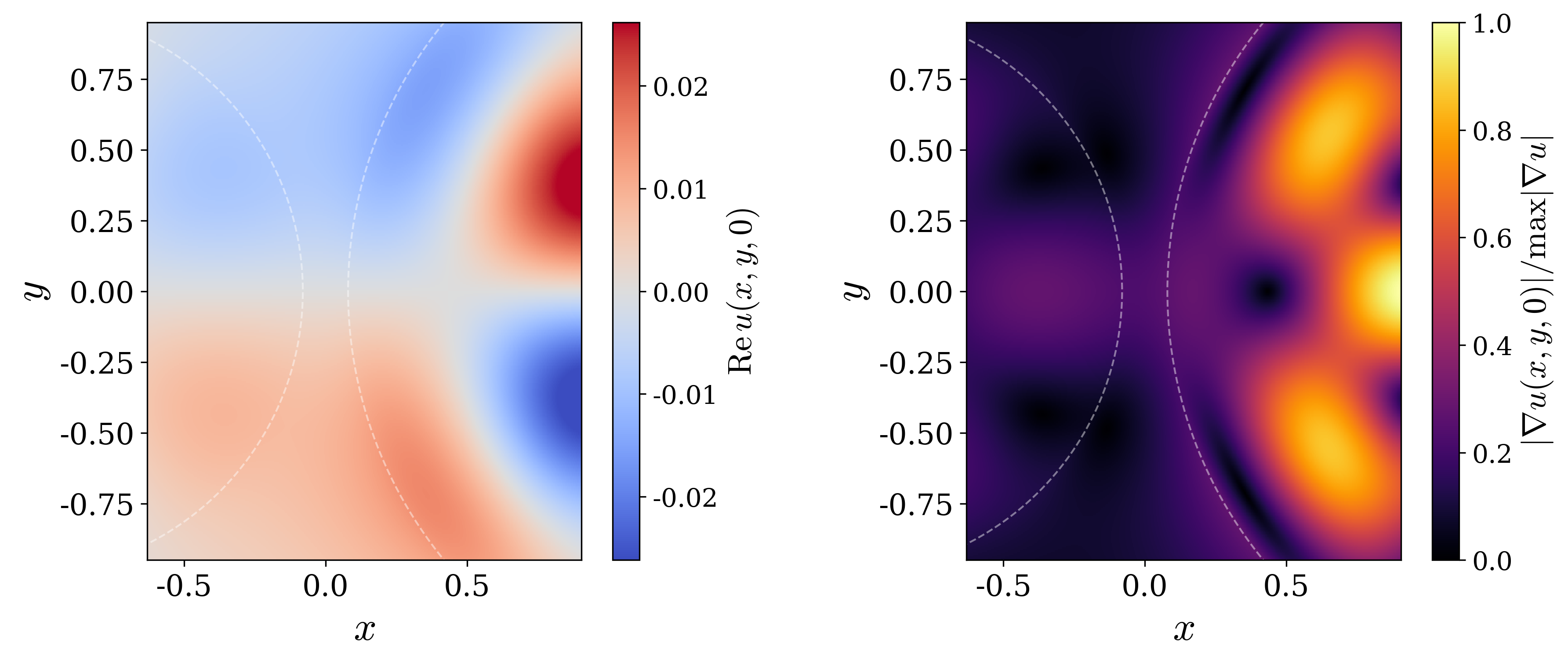}
        \subcaption{Mode $(s,m,n)=(3,1,1)$.}
        \label{fig:mode311_hotspot}
    \end{subfigure}

    \caption{Modal dependence of gradient hotspot formation. Each panel shows a horizontal cross-section through the gap region, comparing the field and the normalized gradient magnitude for a different mode. The modes $(2,0,0)$ and $(4,0,1)$ generate a significant contrast across the gap and produce localized gradient hotspots, while the mode $(3,1,1)$ does not create a comparable contrast and therefore does not produce a pronounced hotspot.}
    \label{fig:modal_dependence_hotspots}
\end{figure}

\subsection{The case of identical resonators}

We conclude the numerical experiments by considering the case of two identical spheres. As discussed in Section~\ref{sec:identical_resonators}, the isolated eigenvalues are then degenerate, and the relevant weak-coupling modes are obtained by taking symmetric and antisymmetric combinations inside the corresponding two-dimensional eigenspaces. The purpose of the present subsection is to illustrate this splitting numerically and to compare the resulting gap fields.

Figure~\ref{fig:identical_main_gap} shows the first radial mode $(s,m,n)=(0,0,0)$. The symmetric combination produces a smooth transition between the two resonators and only a moderate gradient in the gap. By contrast, the antisymmetric combination creates a sign change across the gap and generates a clear magnetic hot-spot. This is the same mean value mechanism identified above, now occurring inside a degenerate eigenspace.

\begin{figure}[ht]
    \centering
    \begin{subfigure}[t]{0.48\linewidth}
        \centering
        \includegraphics[width=\linewidth]{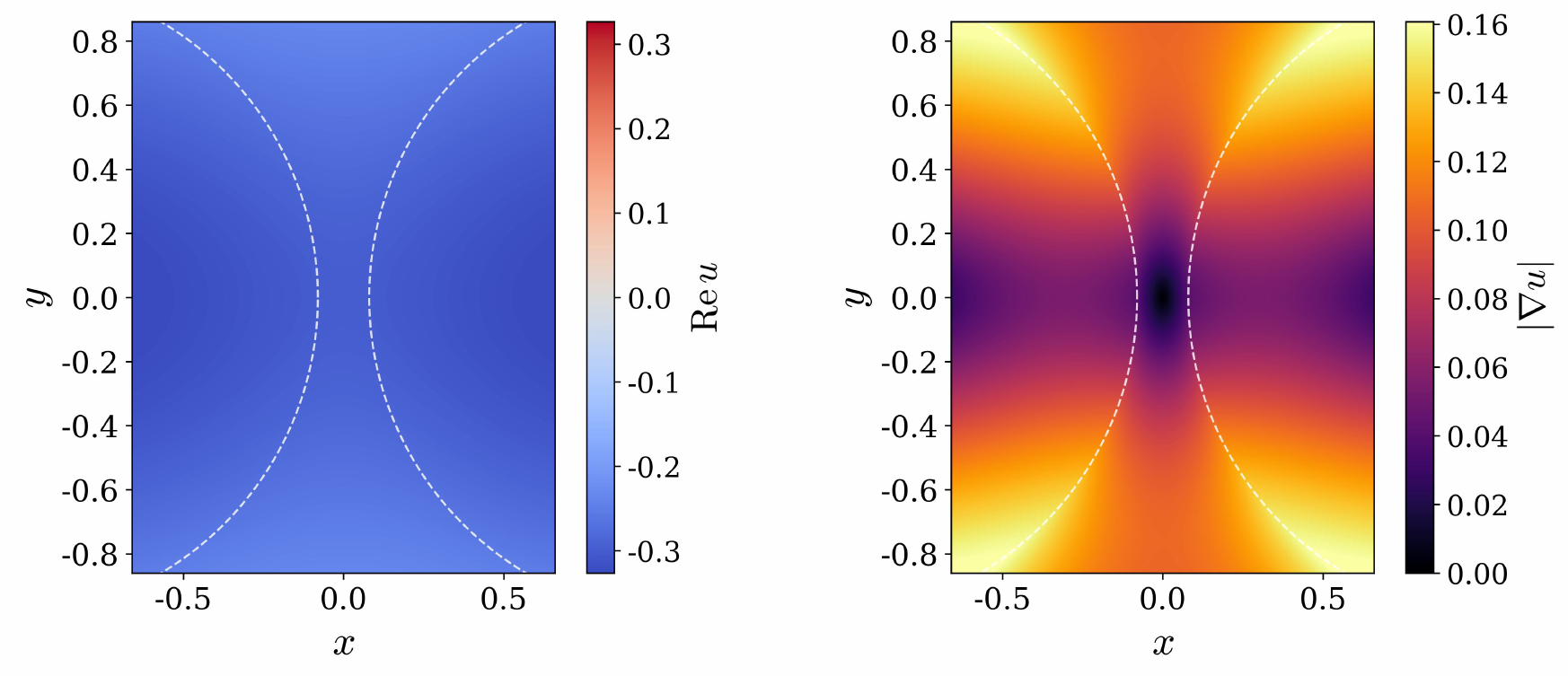}
        \subcaption{Symmetric combination.}
    \end{subfigure}
    \hfill
    \begin{subfigure}[t]{0.48\linewidth}
        \centering
        \includegraphics[width=\linewidth]{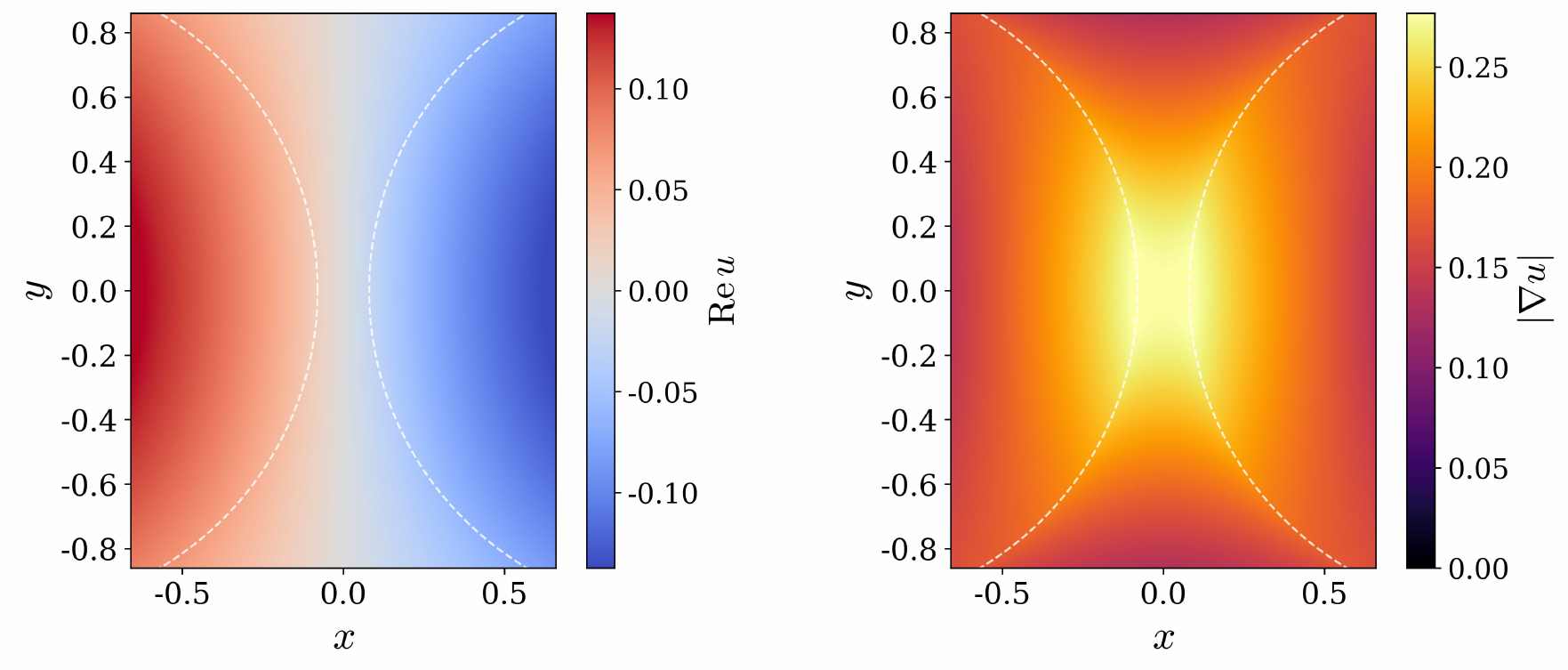}
        \subcaption{Antisymmetric combination.}
    \end{subfigure}
    \caption{Identical resonators, first radial mode $(s,m,n)=(0,0,0)$. Each panel shows the field and magnetic intensity near the gap. The antisymmetric combination produces the localized magnetic hot-spot.}
    \label{fig:identical_main_gap}
\end{figure}

Finally, Figure~\ref{fig:identical_higher_mode_gap} shows the same comparison for the higher-order mode $(s,m,n)=(4,0,1)$. Although the internal structure is more oscillatory, the same qualitative distinction remains: the antisymmetric combination leads to stronger localization in the gap than the symmetric one.

\begin{figure}[ht]
    \centering
    \begin{subfigure}[t]{0.48\linewidth}
        \centering
        \includegraphics[width=\linewidth]{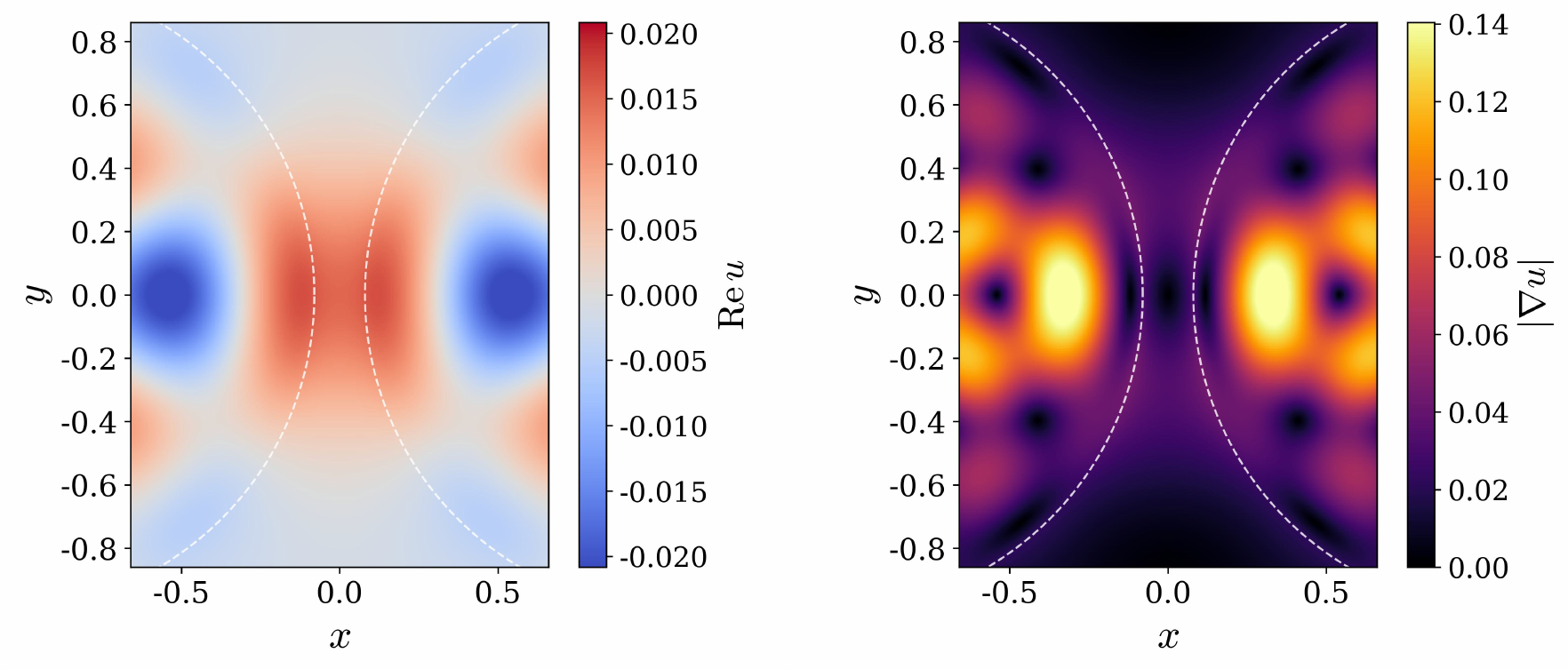}
        \subcaption{Symmetric combination.}
    \end{subfigure}
    \hfill
    \begin{subfigure}[t]{0.48\linewidth}
        \centering
        \includegraphics[width=\linewidth]{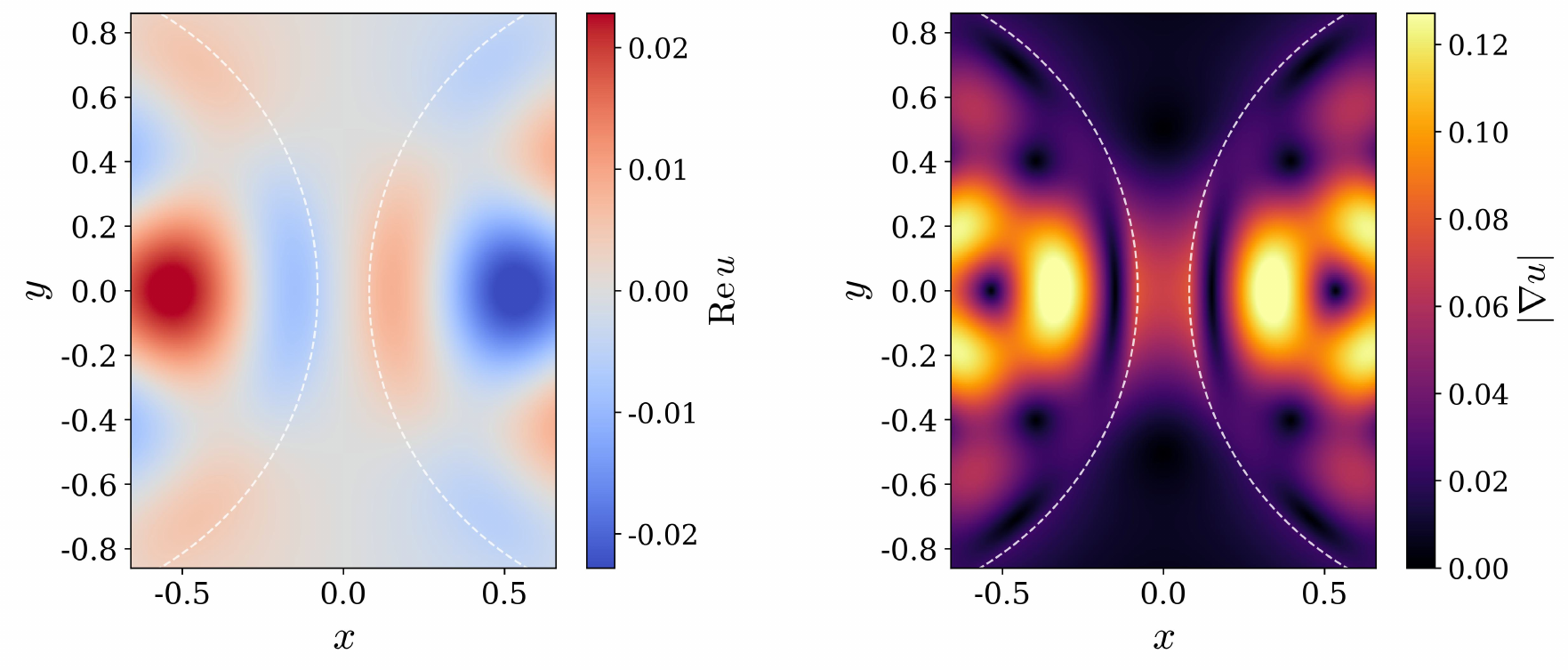}
        \subcaption{Antisymmetric combination.}
    \end{subfigure}
    \caption{Identical resonators, higher-order mode $(s,m,n)=(4,0,1)$. The antisymmetric combination again produces stronger magnetic localization in the gap.}
    \label{fig:identical_higher_mode_gap}
\end{figure}

These simulations confirm that the identical-resonator case fits the same geometric amplification picture, provided the degenerate eigenspaces are resolved into their symmetric and antisymmetric components.

\subsection{Numerical diagnostics for the mean value mechanism and saturation}

The mean value mechanism is illustrated in Figure~\ref{fig:3D_MVT_diagnostics} for the weakly coupled mode $(s,m,n)=(0,0,0)$. The left panel shows the magnitude of the gradient evaluated at the midpoint $x_{\mathrm{mid}}$ of the gap as a function of the separation distance $\kappa$. The dashed reference curve is proportional to $1/\kappa$. In the weak-coupling regime, the numerical curve follows this inverse-gap scaling, confirming that the dominant amplification mechanism is geometric: the resonant field maintains a non-negligible contrast between the two facing boundaries and is therefore forced to vary across a region of width $\kappa$.

The right panel shows the compensated quantity $\kappa|\nabla u(x_{\mathrm{mid}})|$. Over the same intermediate range of gap sizes, this quantity remains approximately constant, which is consistent with the estimate
\begin{align*}
    |\nabla u(x_{\mathrm{mid}})| \sim \frac{1}{\kappa}.
\end{align*}
For smaller gaps, deviations from this behavior indicate the onset of stronger coupling effects. In this regime, the weak-coupling modal approximation begins to break down, and the boundary contrast driving the mean value mechanism is no longer independent of $\kappa$.

\begin{figure}[ht]
    \centering
    \includegraphics[width=0.82\linewidth]{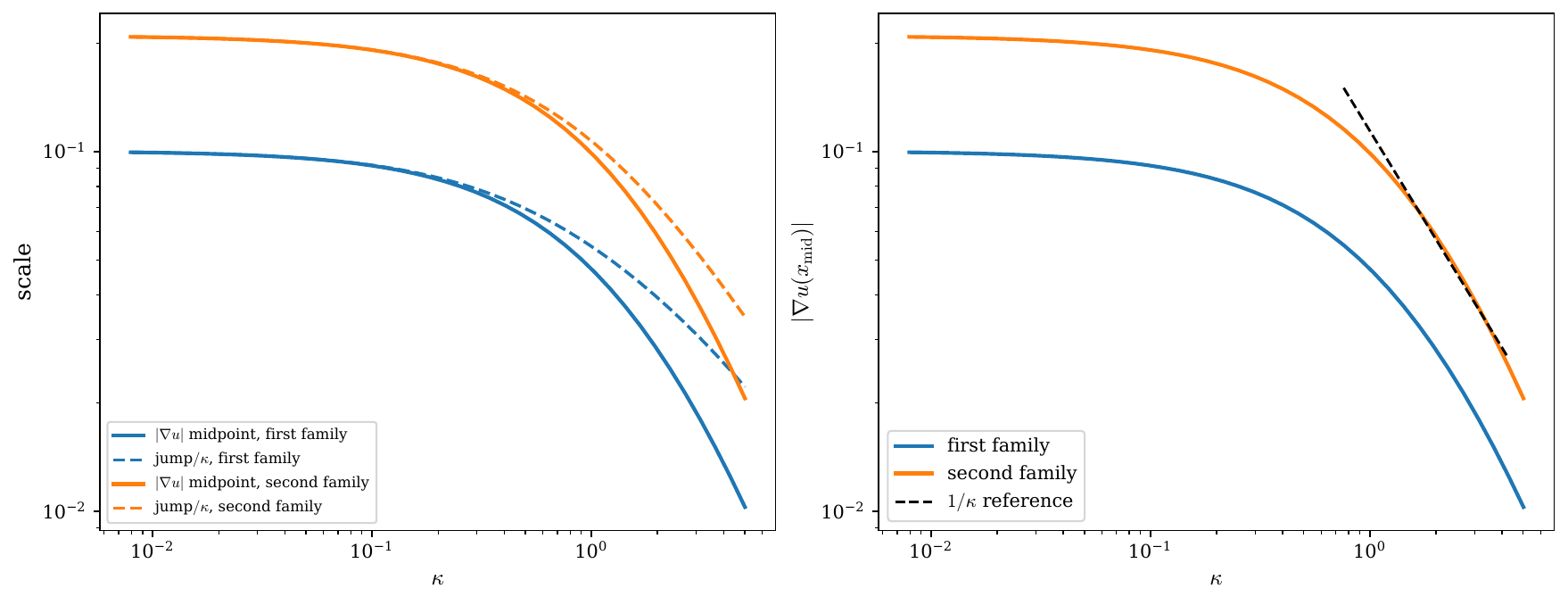}
    \caption{Mean value diagnostics for the weakly coupled mode $(s,m,n)=(0,0,0)$. The gradient at the midpoint of the gap follows the inverse-gap scaling predicted by the mean value mechanism in the intermediate weak-coupling regime. The rescaled quantity $\kappa |\nabla u(x_{\mathrm{mid}})|$ remains approximately constant over the same range, confirming the $\kappa^{-1}$ amplification.}
    \label{fig:3D_MVT_diagnostics}
\end{figure}

Figure~\ref{fig:3D_all_modes_MVT} extends this diagnostic to the three representative modes discussed in subsection~\ref{sec:modal_dependence}. The quantity $|\partial_x u(x_{\mathrm{mid}})|$, which measures the field variation in the gap direction, is plotted as a function of $\kappa$. The modes $(s,m,n)=(0,0,0)$, $(2,0,0)$ and $(4,0,1)$ all exhibit an intermediate regime that approximately follows the inverse-gap scaling predicted by the mean-value argument. The strongest response is observed for the mode $(4,0,1)$, which is also the mode producing the most pronounced hotspot in Figures~\ref{fig:mode200_hotspot}-\ref{fig:mode311_hotspot}. In contrast, the mode $(s,m,n)=(3,1,1)$ remains essentially flat and several orders of magnitude smaller throughout the entire parameter range.

These results provide a quantitative confirmation of the modal dependence of hotspot formation. Modes that generate a significant field contrast across the gap exhibit the inverse-gap amplification predicted by Section~\ref{sec:MVT}, whereas modes that do not create such a contrast fail to produce substantial gradient concentration. Moreover, all amplifying modes eventually deviate from the ideal $1/\kappa$ behavior as the gap becomes sufficiently small, reflecting the breakdown of the weak-coupling regime and the onset of the saturation mechanism described in Section~\ref{sec:MVT}.

\begin{figure}[ht]
    \centering
    \includegraphics[width=0.5\linewidth]{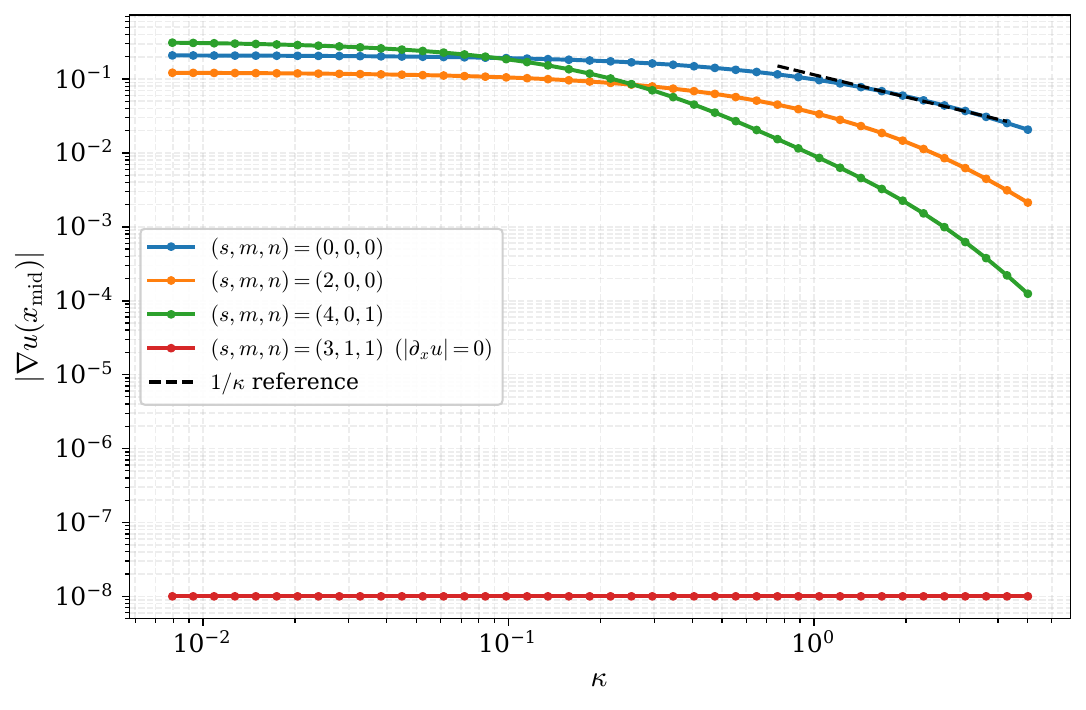}
    \caption{Comparison of $|\nabla u(x_{\mathrm{mid}})|$ for the different modes. Modes that maintain a boundary contrast exhibit inverse-gap amplification, while the mode $(s,m,n)=(3,1,1)$ remains several orders of magnitude smaller.}
    \label{fig:3D_all_modes_MVT}
\end{figure}

\subsection{Numerical investigation for spectral behavior}

The spectral diagnostic shown in Figure~\ref{fig:3D_spectral_branches} provides a complementary operator-level interpretation of the transition from weak coupling to stronger interaction. Each panel tracks one eigenvalue branch $\lambda(\kappa)$ of the coupled operator together with the corresponding isolated eigenvalue $\rho$ of the uncoupled problem. For large gap sizes, the coupled eigenvalues remain close to their isolated reference values, confirming that the spectral structure is still perturbative. As $\kappa$ decreases, the tracked branches move away from the corresponding isolated eigenvalues, showing the increasing influence of the off-diagonal interaction between the two resonators.

This departure from the isolated spectral levels is a signature of modal hybridization. In the weak-coupling regime, the eigenmodes can be interpreted as perturbative deformations of isolated resonator modes. When the gap becomes smaller, this interpretation gradually loses accuracy: the coupled eigenmodes reorganize across the two-particle system, and the isolated-mode approximation breaks down. This spectral behavior is consistent with the saturation mechanism discussed above.

\begin{figure}[ht]
    \centering
    \includegraphics[width=0.9\linewidth]{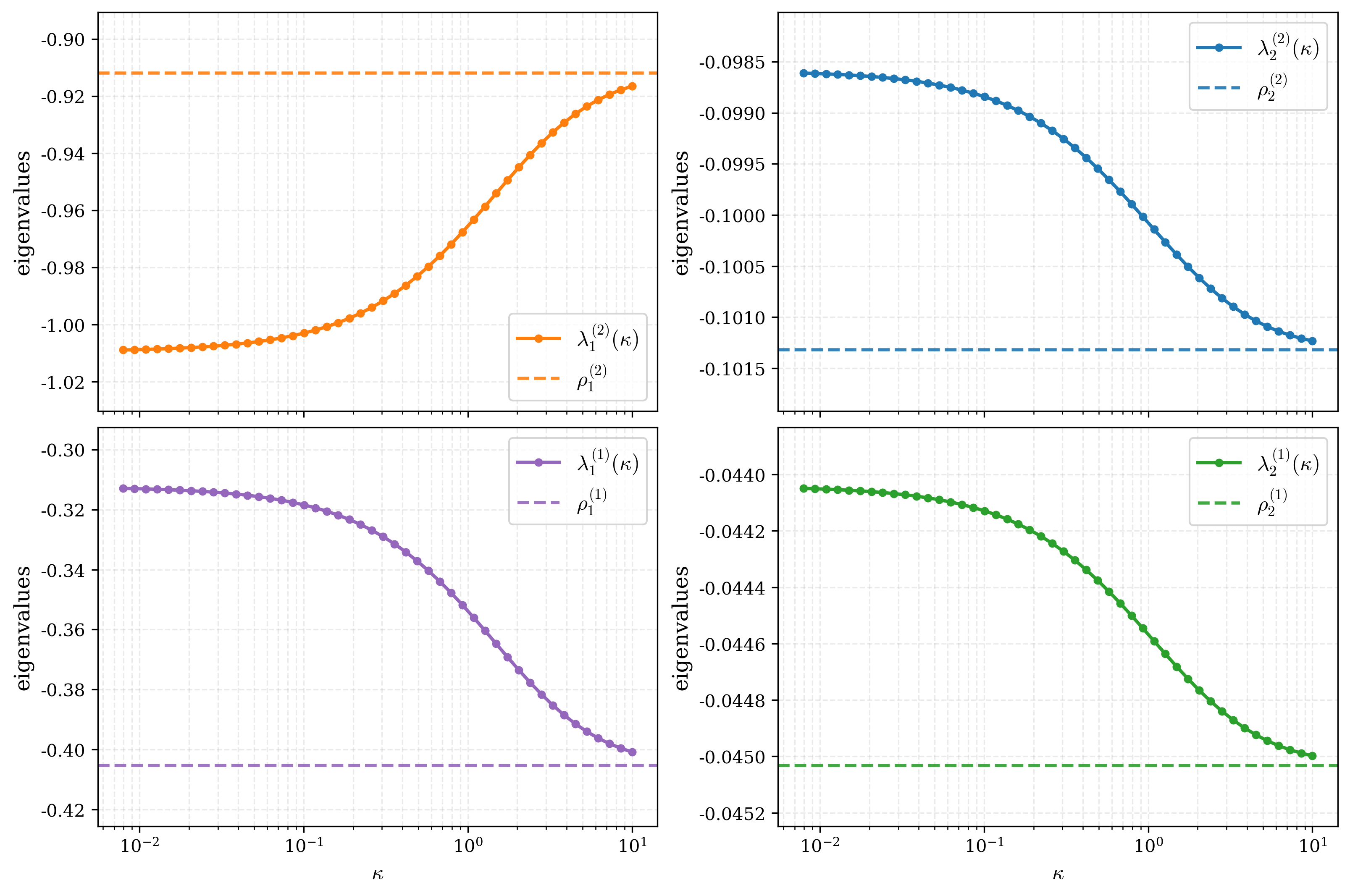}
    \caption{Tracked eigenvalue branches of the coupled operator. Solid curves show coupled eigenvalues $\lambda(\kappa)$, while dashed lines show the corresponding isolated eigenvalues $\rho$. As the gap decreases, the branches depart from their isolated references, indicating stronger inter-particle coupling and the breakdown of the perturbative isolated-mode picture.}
    \label{fig:3D_spectral_branches}
\end{figure}

\section{Conclusion}
\label{sec:Conclusion}

We have analyzed gradient hotspot formation for two weakly coupled dielectric resonators in three dimensions. Using the Lippmann--Schwinger formulation, we showed that the relevant subwavelength mechanism is governed by the Newtonian volume potential and by the interaction between the isolated spectral modes of the two resonators. In the weak-coupling regime, the coupled modes remain perturbative deformations of the isolated modes. When such a mode creates a non-vanishing contrast between the facing boundary values, the field must transition across a gap of width $\kappa$, producing an intermediate inverse-gap amplification of the gradient. This explains the observed gradient hotspots as a modal and geometric concentration effect, rather than as a genuine singularity of the underlying elliptic problem.

The amplification mechanism is limited by the breakdown of weak coupling. As the gap becomes sufficiently small, the off-diagonal interaction is no longer perturbative, the isolated-mode structure reorganizes, and the boundary contrast driving the mean value estimate is reduced, leading to saturation of the gradient. The numerical experiments confirm this transition through mean value diagnostics, weak-coupling criteria, and eigenvalue-flow plots. Natural directions for future work include extending the analysis beyond spherical resonators, treating higher-multiplicity mode families through spectral projections, and extending the scalar model to the full vector Maxwell system.

\appendix

\section{Auxiliary estimates for the pointwise control}
\label{app:Linfty-control}

\subsection{Operators and \texorpdfstring{$L^2\to C^0$}{} boundedness}

We work on
\begin{align*}
    L^2(D)=L^2(D_1)\oplus L^2(D_2),
\end{align*}
and write
\begin{align*}
    \mathcal{K}_D^{(0)}=\mathcal{K}_{\mathrm{diag}}^{(0)} + \mathcal{K}_{\mathrm{off}}^{(0)},
\end{align*}
where
\begin{align*}
    \mathcal{K}_{\mathrm{diag}}^{(0)}
    =
    \begin{pmatrix}
        \mathcal{K}_{11}^{(0)} & 0\\
        0 & \mathcal{K}_{22}^{(0)}
    \end{pmatrix},
    \qquad
    \mathcal{K}_{\mathrm{off}}^{(0)}
    =
    \begin{pmatrix}
        0 & \mathcal{K}_{12}^{(0)}\\
        \mathcal{K}_{21}^{(0)} & 0
    \end{pmatrix}.
\end{align*}
Here $\mathcal{K}_{ii}^{(0)}:L^2(D_i)\to L^2(D_i)$ denotes the self-interaction operator of the $i$-th resonator, while $\mathcal{K}_{ij}^{(0)}:L^2(D_j)\to L^2(D_i)$ denotes the interaction from $D_j$ to $D_i$.

Let
\begin{align*}
    \mathcal N_D u(x):=\frac{1}{4\pi}\int_D \frac{u(y)}{|x-y|}\,dy
\end{align*}
denote the Newtonian volume potential. With the sign convention of the present paper,
\begin{align*}
    \mathcal{K}_D^{(0)}=-\mathcal N_D.
\end{align*}
Since all estimates below are norm estimates, this sign plays no role.

We first record the elementary $L^2\to C^0$ bounds needed in the proof. For the diagonal blocks, one has
\begin{align*}
    \mathcal{K}_{ii}^{(0)}:L^2(D_i)\to C^0(\overline{D_i}),
\end{align*}
with
\begin{align*}
    \|\mathcal{K}_{ii}^{(0)}\|_{L^2(D_i)\to C^0(\overline{D_i})}
    \leq
    \frac{1}{4\pi}
    \sup_{x\in\overline{D_i}}
    \left(
        \int_{D_i}\frac{dy}{|x-y|^2}
    \right)^{1/2}
    <\infty.
\end{align*}
Similarly, if
\begin{align*}
    \kappa:=\operatorname{dist}(D_1,D_2)>0,
\end{align*}
then, for $i\neq j$,
\begin{align*}
    \|\mathcal{K}_{ij}^{(0)}\|_{L^2(D_j)\to C^0(\overline{D_i})}
    \leq
    \frac{|D_j|^{1/2}}{4\pi\kappa}.
\end{align*}

We also use the following rearrangement estimate.

\begin{lemma}[Hardy--Littlewood--Polya rearrangement inequality \cite{HardyLittlewoodPolya}]
Let $A\subset\mathbb R^3$ be measurable with finite measure, and let
$h:\mathbb R^3\to[0,\infty]$ be radial, radially non-increasing, and locally integrable. Then, for every $x\in\mathbb R^3$,
\begin{align*}
    \int_A h(x-y)\,dy
    \leq
    \int_{B_r(x)} h(x-y)\,dy,
    \qquad
    |B_r|=|A|.
\end{align*}
\end{lemma}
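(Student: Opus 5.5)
We plan to prove the lemma by a direct ``bathtub'' comparison rather than through the general theory of symmetric decreasing rearrangements. Fix $x\in\mathbb R^3$ and write $h(z)=\eta(|z|)$ with $\eta:[0,\infty)\to[0,\infty]$ non-increasing. If $|A|=0$ both sides vanish, so we assume $|A|>0$. Let $B:=B_r(x)$ with $|B|=|A|$, so that $r>0$ and $\eta(r)<\infty$; the latter holds because a non-increasing function that is infinite at $r$ would be infinite on all of $(0,r]$, contradicting local integrability of $h$.

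First we check that both integrals are finite, so that subtracting them is legitimate. The integral over $B$ is finite by local integrability of $h$. For $A$, we split $A=(A\cap B)\cup(A\setminus B)$. On $A\cap B$ the integrand is bounded by its integral over $B$. On $A\setminus B$ we have $|x-y|\geq r$, hence $h(x-y)\leq \eta(r)$, and therefore that piece is at most $\eta(r)|A|$.

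Next we cancel the common part $A\cap B$:
\begin{align*}
    \int_A h(x-y)\,dy-\int_{B} h(x-y)\,dy
    =\int_{A\setminus B} h(x-y)\,dy-\int_{B\setminus A} h(x-y)\,dy .
\end{align*}
Since $|A|=|B|<\infty$, subtracting $|A\cap B|$ from both gives $|A\setminus B|=|B\setminus A|$. On $A\setminus B$ we have $h(x-y)\leq\eta(r)$. On $B\setminus A$ we have $|x-y|<r$ (up to the null set $\partial B$), hence $h(x-y)\geq\eta(r)$. The difference above is therefore at most
\begin{align*}
    \eta(r)\bigl(|A\setminus B|-|B\setminus A|\bigr)=0,
\end{align*}
which is the claim.

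The only delicate point is the bookkeeping around infinities: the possible singularity of $h$ at the origin, and making sure no expression of the form $\infty-\infty$ appears. The finiteness of $\eta(r)$ and of $\int_B h(x-y)\,dy$, established in the first two steps, settles both. The sphere $|x-y|=r$ has measure zero, so the strict versus non-strict monotonicity at radius $r$ plays no role.
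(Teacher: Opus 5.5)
Your argument is correct, but it takes a different route from the paper. The paper gives no argument of its own. It cites the general Hardy--Littlewood--P\'olya inequality $\int fg\le\int f^*g^*$ with $f=\chi_A$ and $g=h(x-\cdot)$. This tacitly uses that $(\chi_A)^*=\chi_{B_r(0)}$, that $g^*=h$ almost everywhere because $h$ is already symmetric decreasing, and a translation to recentre the ball at $x$.

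You instead run the bathtub argument specialised to an indicator function. After cancelling $A\cap B$, you compare $h(x-\cdot)$ with the single threshold value $\eta(r)$ on the two equal-measure pieces $A\setminus B$ and $B\setminus A$.

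The citation buys brevity and generality, since any nonnegative $f$ could replace $\chi_A$. The cost is invoking the rearrangement machinery together with the unstated identifications above. Also, that inequality is usually stated for functions vanishing at infinity, which the lemma does not require of $h$ (think of $h\equiv1$). A literal citation therefore needs a small reduction, such as first subtracting $\lim_{s\to\infty}\eta(s)$; your proof uses no decay at all.

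Your version is elementary and self-contained. Your preliminary checks that $\eta(r)<\infty$ and that both integrals are finite are exactly what justifies the subtraction when $h$ takes the value $+\infty$. This matters in the paper's application, where $h(z)=|z|^{-2}$ is infinite at the origin.

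One wording slip: on $A\cap B$ it is the integral, not the integrand, that is bounded by $\int_B h(x-y)\,dy$.
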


\begin{proof}
This is the Hardy--Littlewood--Polya rearrangement inequality applied to
$\chi_A$ and $h(x-\cdot)$.
\end{proof}

\begin{lemma}[$L^2$ to $C^0$ bound for the Newtonian potential]
Assume that $D\subset B_R$. Then
\begin{align*}
    \mathcal N_D:L^2(D)\to L^\infty(\mathbb R^3)
\end{align*}
is bounded and
\begin{align*}
    \|\mathcal N_D\|_{L^2(D)\to L^\infty(\mathbb R^3)}
    \leq
    \frac12\left(\frac{R}{\pi}\right)^{1/2}.
\end{align*}
\end{lemma}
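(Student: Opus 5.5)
The plan is a direct Cauchy--Schwarz estimate, followed by the rearrangement inequality stated just above to bound the resulting kernel integral uniformly in $x$.

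First, fix $x\in\mathbb R^3$ and $u\in L^2(D)$. Applying Cauchy--Schwarz in $y$ gives
\begin{align*}
    |\mathcal N_D u(x)|
    \leq
    \frac{1}{4\pi}\,\|u\|_{L^2(D)}
    \left(\int_D \frac{dy}{|x-y|^2}\right)^{1/2}.
\end{align*}
The kernel $h(z)=|z|^{-2}$ is radial, radially non-increasing, nonnegative and locally integrable in $\mathbb R^3$. So the whole task reduces to bounding $\int_D |x-y|^{-2}\,dy$ uniformly in $x$.

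Second, apply the Hardy--Littlewood--Polya lemma with $A=D$. It gives
\begin{align*}
    \int_D\frac{dy}{|x-y|^2}\leq\int_{B_r(x)}\frac{dy}{|x-y|^2},
    \qquad |B_r|=|D|.
\end{align*}
Since $D\subset B_R$, we have $|D|\leq|B_R|$, hence $r\leq R$. Computing in spherical coordinates,
\begin{align*}
    \int_{B_r(x)}|x-y|^{-2}\,dy=\int_0^r 4\pi s^2\, s^{-2}\,ds=4\pi r\leq 4\pi R.
\end{align*}
Substituting, we get
\begin{align*}
    |\mathcal N_D u(x)|
    \leq
    \frac{(4\pi R)^{1/2}}{4\pi}\,\|u\|_{L^2}
    =
    \frac12\left(\frac{R}{\pi}\right)^{1/2}\|u\|_{L^2}.
\end{align*}
This holds for every $x$, which yields the stated operator norm bound.

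The only potentially delicate point is the use of the rearrangement step. It must be uniform in $x$, including points $x$ outside $D$ or on $\partial D$; this is fine because the lemma holds for every $x\in\mathbb R^3$. The radius comparison $r\leq R$ uses only the volume inclusion $D\subset B_R$, not any convexity of $D$. Continuity of $\mathcal N_D u$, needed to speak of $C^0$, follows by the usual dominated-convergence argument applied to the same square-integrable kernel bound, but it is not needed for the $L^\infty$ statement itself.
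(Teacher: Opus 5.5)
Your proof is correct and follows essentially the same argument as the paper: Cauchy--Schwarz in $y$, then the Hardy--Littlewood--Polya rearrangement to a ball centered at $x$, then the explicit computation $\int_{B_r(x)}|x-y|^{-2}\,dy = 4\pi r$. You are slightly more careful than the paper, which compares directly with $B_R(x)$; you pass through the radius $r$ with $|B_r|=|D|$ and note that $r\le R$, which makes that comparison explicit.
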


\begin{proof}
Let $x\in\mathbb R^3$. By the Cauchy--Schwarz inequality,
\begin{align*}
    |(\mathcal N_Du)(x)|
    \leq
    \frac{1}{4\pi}
    \left(
        \int_D\frac{dy}{|x-y|^2}
    \right)^{1/2}
    \|u\|_{L^2(D)}.
\end{align*}
By the rearrangement inequality,
\begin{align*}
    \int_D\frac{dy}{|x-y|^2}
    \leq
    \int_{B_R(x)}\frac{dy}{|x-y|^2}.
\end{align*}
Moreover,
\begin{align*}
    \int_{B_R(x)}\frac{dy}{|x-y|^2}
    =
    4\pi\int_0^R dr
    =
    4\pi R.
\end{align*}
Therefore,
\begin{align*}
    |(\mathcal N_Du)(x)|
    \leq
    \frac{1}{4\pi}(4\pi R)^{1/2}\|u\|_{L^2(D)}
    =
    \frac12\left(\frac{R}{\pi}\right)^{1/2}\|u\|_{L^2(D)}.
\end{align*}
Taking the supremum over $x\in\mathbb R^3$ gives the result.
\end{proof}

\subsection{Degenerate case: identical resonators}

We finally discuss the degenerate case corresponding to identical resonators. Let $\Phi$ be a normalized simple eigenmode of one resonator, associated with the eigenvalue $\rho$. Then $\rho$ gives rise to a two-dimensional eigenspace for $\mathcal{K}_{\mathrm{diag}}^{(0)}$,
\begin{align*}
    E_\rho
    =
    \operatorname{span}
    \left\{
    (\Phi,0)^T,
    (0,\Phi)^T
    \right\}.
\end{align*}
Let $P_\rho$ denote the orthogonal projection onto $E_\rho$, and define
\begin{align*}
    d_\rho
    :=
    \operatorname{dist}
    \left(
    \rho,
    \operatorname{sp}(\mathcal{K}_{\mathrm{diag}}^{(0)})\setminus\{\rho\}
    \right)
    >0.
\end{align*}
The perturbation is diagonalized inside $E_\rho$ by the symmetric and antisymmetric modes
\begin{align*}
    u_+
    :=
    \frac{1}{\sqrt2}(\Phi,\Phi)^T,
    \qquad
    u_-
    :=
    \frac{1}{\sqrt2}(\Phi,-\Phi)^T.
\end{align*}
Indeed, in the basis
\begin{align*}
    \left((\Phi,0)^T,(0,\Phi)^T\right),
\end{align*}
the restriction $P_\rho \mathcal{K}_{\mathrm{off}}^{(0)}P_\rho$ has the form
\begin{align*}
    \begin{pmatrix}
        0 & \alpha\\
        \alpha & 0
    \end{pmatrix},
\end{align*}
where
\begin{align*}
    \alpha
    :=
    \left\langle
    \mathcal{K}_{\mathrm{off}}^{(0)}(\Phi,0)^T,
    (0,\Phi)^T
    \right\rangle_{L^2}
    =
    \left\langle
    \mathcal{K}_{21}^{(0)}\Phi,\Phi
    \right\rangle_{L^2(D_2)}.
\end{align*}
Therefore the first-order shifts are
\begin{align*}
    \mu_\pm=\pm\alpha,
\end{align*}
with eigenvectors $u_\pm$.

\begin{remark}
The main difference with the non-degenerate case is that the diagonal matrix element of the perturbation inside the relevant eigenspace does not vanish:
\begin{align*}
    \langle \mathcal{K}_{\mathrm{off}}^{(0)}u_\pm,u_\pm\rangle_{L^2}
    =
    \pm\alpha.
\end{align*}
Hence the eigenvalue shift has a non-zero first-order contribution of size
$|\alpha|\leq \varepsilon_2$, and the sharper bound
\begin{align*}
    |\delta\rho|=O\left(\frac{\varepsilon_2^2}{d_\rho}\right)
\end{align*}
is not available in general.
\end{remark}

The perturbed eigenpairs splitting from $\rho$ satisfy
\begin{align*}
    \rho_{\kappa,\pm}
    =
    \rho+\Delta\rho_\pm,
\end{align*}
where
\begin{align*}
    \Delta\rho_\pm
    =
    \pm\alpha+\delta\rho_\pm^{(2)},
    \qquad
    |\delta\rho_\pm^{(2)}|
    =
    O\left(
        \frac{\varepsilon_2^2}{d_\rho}
    \right).
\end{align*}
Moreover,
\begin{align*}
    \|\Phi_{\kappa,\pm}-u_\pm\|_{L^2}
    =
    O\left(
        \frac{\varepsilon_2}{d_\rho}
    \right).
\end{align*}
Here $u_\pm$ is an exact eigenvector of $\mathcal{K}_{\mathrm{diag}}^{(0)}$ with eigenvalue $\rho$, since
\begin{align*}
    \mathcal{K}_{\mathrm{diag}}^{(0)}u_\pm
    =
    \frac{1}{\sqrt2}
    \left(
    \mathcal{K}_{11}^{(0)}\Phi,
    \pm \mathcal{K}_{22}^{(0)}\Phi
    \right)^T
    =
    \rho u_\pm.
\end{align*}

We can now repeat the direct $C^0$ argument.

\begin{proposition}[Direct $C^0$ estimate in the degenerate case]
\label{prop:directC0_deg}
Set $\delta\phi_\pm := \Phi_{\kappa,\pm}-u_\pm$ and assume that
\begin{align*}
    \frac{|\Delta\rho_\pm|}{|\rho|}
    \leq
    \frac12.
\end{align*}
Then
\begin{align*}
    \|\delta\phi_\pm\|_{C^0}
    \leq
    \frac{2}{|\rho|}
    \left(
    C_0\|\delta\phi_\pm\|_{L^2}
    +
    \varepsilon_\infty
    +
    \varepsilon_\infty\|\delta\phi_\pm\|_{L^2}
    +
    |\Delta\rho_\pm|\|u_\pm\|_{C^0}
    \right).
\end{align*}
\end{proposition}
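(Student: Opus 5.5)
The proof will follow the non-degenerate argument of Proposition~\ref{prop:directC0} almost line by line, with the lifted isolated mode $\Phi$ replaced by the symmetric or antisymmetric combination $u_\pm$. The one structural input needed is that $u_\pm$ is an exact eigenvector of $\mathcal{K}_{\mathrm{diag}}^{(0)}$ with eigenvalue $\rho$, as verified just above. We also use that $\|u_\pm\|_{L^2}=1$, since $\Phi$ is normalized on one resonator and $u_\pm=\frac{1}{\sqrt2}(\Phi,\pm\Phi)^T$.

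\textbf{Error equation.} We write the perturbed and unperturbed eigenvalue equations:
\begin{align*}
    (\rho+\Delta\rho_\pm)(u_\pm+\delta\phi_\pm)
    =
    (\mathcal{K}_{\mathrm{diag}}^{(0)}+\mathcal{K}_{\mathrm{off}}^{(0)})(u_\pm+\delta\phi_\pm),
    \qquad
    \rho u_\pm=\mathcal{K}_{\mathrm{diag}}^{(0)}u_\pm .
\end{align*}
Subtracting them gives
\begin{align*}
    \rho\,\delta\phi_\pm
    =
    \mathcal{K}_{\mathrm{diag}}^{(0)}\delta\phi_\pm
    +\mathcal{K}_{\mathrm{off}}^{(0)}u_\pm
    +\mathcal{K}_{\mathrm{off}}^{(0)}\delta\phi_\pm
    -\Delta\rho_\pm u_\pm
    -\Delta\rho_\pm\delta\phi_\pm .
\end{align*}
The first-order shift $\pm\alpha$ is contained in $\Delta\rho_\pm$, so no special treatment is needed here. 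The only change from the non-degenerate case is that $\Delta\rho_\pm$ is of size $O(\varepsilon_2)$ rather than $O(\varepsilon_2^2/d_\rho)$. That difference matters only later, when $|\Delta\rho_\pm|$ is bounded explicitly; it plays no role in the estimate itself.

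\textbf{$C^0$ estimate.} We take the $C^0(\overline D)$ norm of the error equation and apply the operator bounds \eqref{eq:epsinf_C0_def}:
\begin{align*}
    \|\mathcal{K}_{\mathrm{diag}}^{(0)}v\|_{C^0}\le C_0\|v\|_{L^2},
    \qquad
    \|\mathcal{K}_{\mathrm{off}}^{(0)}v\|_{C^0}\le\varepsilon_\infty\|v\|_{L^2}.
\end{align*}
Since $\|u_\pm\|_{L^2}=1$, the term $\mathcal{K}_{\mathrm{off}}^{(0)}u_\pm$ contributes exactly $\varepsilon_\infty$. Dividing by $|\rho|$ then gives
\begin{align*}
    \|\delta\phi_\pm\|_{C^0}
    \le
    \frac{1}{|\rho|}\Big(C_0\|\delta\phi_\pm\|_{L^2}+\varepsilon_\infty+\varepsilon_\infty\|\delta\phi_\pm\|_{L^2}+|\Delta\rho_\pm|\|u_\pm\|_{C^0}+|\Delta\rho_\pm|\|\delta\phi_\pm\|_{C^0}\Big).
\end{align*}

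\textbf{Absorption.} By hypothesis $|\Delta\rho_\pm|/|\rho|\le\frac12$, so the last term on the right is at most $\frac12\|\delta\phi_\pm\|_{C^0}$. Moving it to the left and multiplying by $2$ yields the stated bound.

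\textbf{Finiteness (the main point to check).} Absorption is legitimate only if $\|\delta\phi_\pm\|_{C^0}<\infty$. We argue this a priori:
\begin{itemize}
    \item $u_\pm=\rho^{-1}\mathcal{K}_{\mathrm{diag}}^{(0)}u_\pm$ and $\Phi_{\kappa,\pm}=\rho_{\kappa,\pm}^{-1}\mathcal{K}_D^{(0)}\Phi_{\kappa,\pm}$.
    \item $\rho_{\kappa,\pm}\neq0$, because $|\Delta\rho_\pm|\le|\rho|/2$.
    \item The Newtonian potential maps $L^2$ into $C^0(\overline D)$, with the uniform bound of the appendix lemma.
\end{itemize}
Hence both $u_\pm$ and $\Phi_{\kappa,\pm}$ are continuous on $\overline D$, and so is their difference $\delta\phi_\pm$. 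Everything else is routine bookkeeping identical to the non-degenerate case.
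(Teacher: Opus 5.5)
Your proof is correct and follows the paper's argument exactly. You subtract the eigenvalue equations for $\Phi_{\kappa,\pm}$ and $u_\pm$, take $C^0(\overline D)$ norms using $C_0$, $\varepsilon_\infty$ and $\|u_\pm\|_{L^2}=1$, and absorb the term $|\Delta\rho_\pm|\,\|\delta\phi_\pm\|_{C^0}$ via $|\Delta\rho_\pm|\le|\rho|/2$; your extra check that $\|\delta\phi_\pm\|_{C^0}<\infty$, which makes the absorption legitimate, is a welcome detail the paper leaves implicit.
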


\begin{proof}
From
\begin{align*}
    (\mathcal{K}_{\mathrm{diag}}^{(0)}+\mathcal{K}_{\mathrm{off}}^{(0)})\Phi_{\kappa,\pm}
    =
    (\rho+\Delta\rho_\pm)\Phi_{\kappa,\pm}
\end{align*}
and
\begin{align*}
    \mathcal{K}_{\mathrm{diag}}^{(0)}u_\pm=\rho u_\pm,
\end{align*}
subtracting gives
\begin{align*}
    \rho\,\delta\phi_\pm
    =
    \mathcal{K}_{\mathrm{diag}}^{(0)}\delta\phi_\pm
    +
    \mathcal{K}_{\mathrm{off}}^{(0)}u_\pm
    +
    \mathcal{K}_{\mathrm{off}}^{(0)}\delta\phi_\pm
    -
    \Delta\rho_\pm u_\pm
    -
    \Delta\rho_\pm\delta\phi_\pm.
\end{align*}
Taking the $C^0(\overline D)$ norm and using
\begin{align*}
    \|\mathcal{K}_{\mathrm{diag}}^{(0)}v\|_{C^0}
    \leq
    C_0\|v\|_{L^2},
    \qquad
    \|\mathcal{K}_{\mathrm{off}}^{(0)}v\|_{C^0}
    \leq
    \varepsilon_\infty\|v\|_{L^2},
\end{align*}
together with $\|u_\pm\|_{L^2}=1$, gives
\begin{align*}
    \|\delta\phi_\pm\|_{C^0}
    \leq
    \frac{1}{|\rho|}
    \left(
    C_0\|\delta\phi_\pm\|_{L^2}
    +
    \varepsilon_\infty
    +
    \varepsilon_\infty\|\delta\phi_\pm\|_{L^2}
    +
    |\Delta\rho_\pm|\|u_\pm\|_{C^0}
    +
    |\Delta\rho_\pm|\|\delta\phi_\pm\|_{C^0}
    \right).
\end{align*}
The assumption $|\Delta\rho_\pm|/|\rho|\leq 1/2$ allows the last term to be absorbed into the left-hand side. This proves the estimate.
\end{proof}

Using
\begin{align*}
    \|\delta\phi_\pm\|_{L^2}
    \leq
    C\frac{\varepsilon_2}{d_\rho},
\end{align*}
the identity
\begin{align*}
    u_\pm = \rho^{-1} \mathcal{K}_{\mathrm{diag}}^{(0)} u_\pm,
\end{align*}
which implies
\begin{align*}
    \|u_\pm\|_{C^0}
    \leq
    \frac{C_0}{|\rho|},
\end{align*}
and the eigenvalue estimate
\begin{align*}
    |\Delta\rho_\pm|
    \leq
    C\varepsilon_2,
\end{align*}
we obtain the following.

\begin{corollary}[Pointwise estimate in the degenerate case]
\label{cor:directC0_deg}
Under the weak-coupling assumptions and
\begin{align*}
    \frac{|\Delta\rho_\pm|}{|\rho|}
    \leq
    \frac12,
\end{align*}
one has
\begin{align*}
    \|\delta\phi_\pm\|_{C^0}
    \leq
    C\left(
    \frac{C_0}{|\rho|}\frac{\varepsilon_2}{d_\rho}
    +
    \frac{\varepsilon_\infty}{|\rho|}
    +
    \frac{\varepsilon_\infty}{|\rho|}\frac{\varepsilon_2}{d_\rho}
    +
    \frac{C_0}{|\rho|^2}\varepsilon_2
    \right),
\end{align*}
where $C$ is independent of $\varepsilon_2$ and $\varepsilon_\infty$.
\end{corollary}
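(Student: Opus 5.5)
The plan is to substitute the three bounds already in hand into Proposition~\ref{prop:directC0_deg}, exactly as in the proof of Corollary~\ref{cor:C0_control}. The only difference from Corollary~\ref{cor:C0_control_sharper} is that we cannot use the quadratic eigenvalue shift: in the degenerate case the first-order shift $\pm\alpha$ survives.

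The steps, in order, are as follows.

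\emph{Check the hypotheses.} The absorption step in Proposition~\ref{prop:directC0_deg} needs $|\Delta\rho_\pm|/|\rho|\leq\frac12$, which is assumed. It also needs $\Phi_{\kappa,\pm}$ to be $L^2$-normalized, so that $\|u_\pm\|_{L^2}=1$ controls the term $\mathcal K_{\mathrm{off}}^{(0)}u_\pm$.

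\emph{Insert the $L^2$ perturbation bound.} We use
\begin{align*}
    \|\delta\phi_\pm\|_{L^2}\leq C\frac{\varepsilon_2}{d_\rho}.
\end{align*}
This comes from degenerate perturbation theory on the two-dimensional eigenspace $E_\rho$. The perturbation $P_\rho\mathcal K_{\mathrm{off}}^{(0)}P_\rho$ is diagonalized by $u_\pm$, and the component leaving $E_\rho$ is controlled by the reduced resolvent $Q_\rho(\mathcal K_{\mathrm{diag}}^{(0)}-\rho)^{-1}Q_\rho$, whose norm is at most $d_\rho^{-1}$. This bound makes $C_0\|\delta\phi_\pm\|_{L^2}$ of size $C_0\varepsilon_2/d_\rho$, and makes $\varepsilon_\infty\|\delta\phi_\pm\|_{L^2}$ of size $\varepsilon_\infty\varepsilon_2/d_\rho$.

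\emph{Bound the unperturbed mode pointwise.} Apply the identity $u_\pm=\rho^{-1}\mathcal K_{\mathrm{diag}}^{(0)}u_\pm$ together with the $L^2\to C^0$ bound for the diagonal block. This gives $\|u_\pm\|_{C^0}\leq C_0/|\rho|$.

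\emph{Bound the eigenvalue shift.} We use $|\Delta\rho_\pm|\leq|\alpha|+O(\varepsilon_2^2/d_\rho)\leq C\varepsilon_2$, since $|\alpha|\leq\varepsilon_2$ by Cauchy--Schwarz and $\varepsilon_2/d_\rho\ll1$. Hence the term $|\Delta\rho_\pm|\,\|u_\pm\|_{C^0}$ is at most $C\,C_0\varepsilon_2/|\rho|$.

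\emph{Collect terms.} Multiplying everything by the prefactor $2/|\rho|$ yields the four terms of the claimed estimate, with a constant $C$ that is independent of $\varepsilon_2$ and $\varepsilon_\infty$.

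The main obstacle is the $L^2$ bound in the second step. It requires that the $O(\varepsilon_2^2/d_\rho)$ splitting be resolved, meaning that the perturbed eigenvectors are close to $u_\pm$ specifically rather than to some other basis of $E_\rho$. If $\alpha$ is very small, the two first-order shifts $\pm\alpha$ nearly coincide and this identification can fail. I would handle this by stating the bound for the perturbed spectral projection onto the cluster near $\rho$, using a Riesz contour of radius $d_\rho/2$ around $\rho$. Within that cluster, $u_\pm$ are exact eigenvectors of the first-order reduced operator, so I would take them as the reference vectors.
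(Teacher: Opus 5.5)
Your proposal is the paper's argument: you insert $\|\delta\phi_\pm\|_{L^2}\leq C\varepsilon_2/d_\rho$, $\|u_\pm\|_{C^0}\leq C_0/|\rho|$ (from $u_\pm=\rho^{-1}\mathcal K_{\mathrm{diag}}^{(0)}u_\pm$) and $|\Delta\rho_\pm|\leq C\varepsilon_2$ into Proposition~\ref{prop:directC0_deg} and collect the four terms, and like the paper you take the $L^2$ bound as a given input. Your worry about small $\alpha$ is legitimate for a general degenerate pair, but in the paper's identical-resonator setting it disappears: the identities $\mathcal K_{11}^{(0)}\equiv\mathcal K_{22}^{(0)}$, $\mathcal K_{12}^{(0)}\equiv\mathcal K_{21}^{(0)}$ say exactly that the swap of the two components commutes with $\mathcal K_D^{(0)}$, so non-degenerate perturbation theory in the symmetric and antisymmetric sectors separately gives closeness to $u_\pm$ for any size of $\alpha$; by contrast, your Riesz-cluster fix as phrased only controls the projection, not which vectors of $E_\rho$ the perturbed eigenvectors approach.
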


\bibliographystyle{abbrv}
\bibliography{references}{}

\end{document}